\newcommand{\scal}[2]{\langle #1,#2\rangle}
\newcommand{\rr}[1]{\mathbf R^{#1}}
\newcommand{\mabfp}{{\boldsymbol p}}
\newcommand{\mabfq}{{\boldsymbol q}}
\newcommand{\cc}[1]{\mathbf C^{#1}}
\newcommand{\zz}[1]{\mathbf Z^{#1}}
\newcommand{\nm}[2]{\Vert #1\Vert _{#2}}
\newcommand{\sets}[2]{\{ \, #1\, ;\, #2\, \} }
\newcommand{\ep}{\varepsilon}
\newcommand{\fy}{\varphi}
\newcommand{\cdo}{\, \cdot \, }
\newcommand{\essup}{\operatorname{ess\, sup}}
\newcommand{\vrum}{\vspace{0.1cm}}
\newcommand{\rd}{\mathbf{R} ^{d}}
\newcommand{\im}{i}
\newcommand{\nn}[1]{{\mathbf N}^{#1}}
\newcommand{\maclH}{\mathcal H}
\newcommand{\maclS}{\mathcal S}
\newcommand{\mascB}{\mathscr B}
\newcommand{\mascF}{\mathscr F}
\newcommand{\mascP}{\mathscr P}
\newcommand{\mascS}{\mathscr S}
\numberwithin{equation}{section}          
\newtheorem{thm}{Theorem}
\numberwithin{thm}{section}
\newcommand{\rubrik}{}
\newtheorem{prop}[thm]{Proposition}
\newtheorem{cor}[thm]{Corollary}
\newtheorem{lemma}[thm]{Lemma}
\theoremstyle{definition}
\newtheorem{defn}[thm]{Definition}
\newtheorem{example}[thm]{Example}
\theoremstyle{remark}
\newtheorem{rem}[thm]{Remark}
\author{Christine Pfeuffer}
\address{Department of Mathematics,
University of Regensburg, Regensburg, Germany}
\email{christine.pfeuffer@mathematik.uni-regensburg.de}
\author{Joachim Toft}
\address{Department of Mathematics,
Linn{\ae}us University, V{\"a}xj{\"o}, Sweden}
\email{joachim.toft@lnu.se}
\title{Compactness properties for modulation spaces}
\keywords{}
\subjclass[2010]{}
\begin{document}

\begin{abstract} 
We prove that if $\omega _1$ and $\omega _2$ are moderate
weights and $\mascB$ is a suitable (quasi-)Banach function space, then
a necessary and sufficient condition for the embedding 
\ $i\, :\, M (\omega _1,\mascB )\to M (\omega _2,\mascB )$  
between two modulation spaces to be compact is that the quotient \
$\omega _2/\omega _1$ vanishes at infinity. Moreover we show, that the boundedness of $\omega _2/\omega _1$ a necessary and sufficient condition for the previous embedding to be continuous. 
\end{abstract}

\maketitle

\section{Introduction}\label{sec0}

\par

In the paper we extend well-known compact embedding properties for classical
modulation spaces to a broader family of modulation spaces. These investigations
go in some sense back to \cite{Shubin}, where M. Shubin proved that if $t>0$, then
the embedding $i\! : \! Q_s\to Q_{s-t}$ is compact. In the community,
the previous compactness property was not obvious since any similar fact does not
hold when the Shubin spaces $Q_s$ and $Q_{s-t}$ are replaced by the Sobolev spaces
$H^2_s$ and $H^2_{s-t}$ of Hilbert types. Since
\begin{alignat}{2}
Q_s &= M^{2,2}_{(\omega )},& \qquad \omega (X) &= (1+|x|+|\xi |)^s
\label{Eq:ShubinMod}
\intertext{and}
H^2_s &= M^{2,2}_{(\omega )},& \qquad \omega (X) &= (1+|\xi |)^s,
\qquad  X=(x,\xi )
\label{Eq:SobolevMod}
\end{alignat}
the previous compact embedding properties can also be
written by means of modulation spaces. In this context, a more general
situation were considered by M. D{\"o}rfler,
H. Feichtinger and K. Gr{\"o}chenig who proved in \cite[Theorem 5]{DorFeiGro}
that if $p,q\in [1,\infty )$, and $\omega _1$ and
$\omega _2$ are certain moderate weights of polynomial types, then
\begin{equation}\label{eq0.5}
i\! : \! M^{p,q}_{(\omega _1)}(\rr d)\to M^{p,q}_{(\omega _2)}(\rr d)
\end{equation}
is compact if and only if $\omega _2/\omega _1$ tends to
zero at infinity. By choosing $\omega _j$ in similar ways as in
\eqref{Eq:ShubinMod}, the latter compactness result confirms the compactness
of embedding $i\! : \! Q_s\to Q_{s-t}$ above by Shubin, as well as confirms
the lack of compactness of the embedding $i\! : \! H^2_s\to H^2_{s-t}$ for Sobolev
spaces.
%

\par

In \cite{BoTo}, the compact embedding property
\cite[Theorem 5]{DorFeiGro} by D{\"o}rfler,
Feichtinger and Gr{\"o}chenig were extended in such
ways that all moderate weights $\omega _j$ of polynomial type are
included. That is, there are no other restrictions on $\omega _j$ than there
should exists constants $N_j>0$ such that
\begin{equation}\label{Eq:PolMod}
\omega _j(X+Y)\lesssim \omega _j(X)(1+|Y|)^{N_j},\qquad j=1,2.
\end{equation}
Moreover, in \cite{BoTo}, the Lebesgue exponents $p$ and $q$ are allowed to
attain $\infty$.

\par

In Section \ref{sec2} we extend these results to involve modulation
spaces $M(\omega ,\mascB)$, which are more general in different ways.
Firstly, there are no boundedness estimates  of polynomial type for the involved
weight $\omega$. In most of our considerations, we require that the weights are
moderate, which impose boundedness estimate of exponential types
$$
\omega _j(X+Y)\lesssim \omega _j(X)e^{r_j(|Y|)},\qquad j=1,2,
$$
for some constants $c_1,c_2>0$. We notice that the latter estimate is less restrective
than the condition \eqref{Eq:PolMod}, which is assumed in \cite{BoTo,DorFeiGro}.

\par

Secondly, 
$\mascB$ can be any general translation invariant Banach function space without
restrictions that $M(\omega ,\mascB)$ should be of the form $M^{p,q}_{(\omega )}$.
We may also have $M(\omega ,\mascB)=M^{p,q}_{(\omega )}$, but in contrast to. 
\cite{BoTo,DorFeiGro}, we here allow
$p$ and $q$ to be smaller than $1$. Here we
notice that if $p<1$ or $q<1$, then $M^{p,q}_{(\omega )}$ fails to be a Banach space
because of absence of convex topological structures.

\par

Thirdly, we show that \eqref{eq0.5} is compact when $\omega _2/\omega _1$ tends
to zero at infinity, and the conditions on $\omega _1$ and $\omega _2$ are relaxed
into a suitable "local moderate condition" (cf. Theorem \ref{compact} (1)). We refer to
\cite{To18} and to some extent to \cite{To22} for a detailed study of modulation
spaces with such relaxed conditions assumptions on the involved weight functions.

\par

Finally we remark that compactness properties for \eqref{eq0.5} can also be obtained
by Gabor analysis, which transfers \eqref{eq0.5} into
$$
i\! : \! \ell ^{p,q}_{(\omega _1)}\to \ell ^{p,q}_{(\omega _2)},
$$
provided $\omega _1$ and $\omega _2$ are moderate weights. Since it is clear that
the latter inclusion map is compact, if and only if $\omega _2/\omega _1$ tends to
$0$ at infinity. Hence the compactness results in \cite{Shubin,BoTo} as well as some of
the results in Section \ref{sec2} can be deduced in such ways. We emphasise however
that such technique can not be used in those situations in Section \ref{sec2}
when modulation spaces are of the form
$M(\omega ,\mascB )$, where either $\mascB$ is a general BF-space, or $\omega$ fails
to be moderate, since the Gabor analysis seems to be insufficient in such situations.

\par

\section{Preliminaries}\label{sec1}

In this section we discuss basic properties for modulation
spaces and other related spaces. The proofs are in many cases omitted
since they can be found in \cite
{Fe2,Fe3,Fe4,FG1,FG2,FG4,Gc2,To1,To2,To7,To8}.

\par

\subsection{Weight functions}\label{subsec1.1}

A \emph{weight} or \emph{weight function} $\omega$ on $\rr d$ is a
positive function such that $\omega ,1/\omega \in
L^\infty _{loc}(\rr d)$. Let $\omega$ and $v$ be weights on $\rr d$.
Then $\omega$ is called \emph{$v$-moderate} or \emph{moderate},
if
\begin{equation}\label{e1.1}
\omega (x_1+x_2)\lesssim \omega (x_1) v(x_2),\quad x_1,x_2\in \rr d .
\end{equation}
Here $f(\theta )\lesssim g(\theta )$ means that $f(\theta )\le cg(\theta)$ for some
constant $c>0$ which is independent of $\theta$ in the domain of $f$ and $g$.
If $v$ can be chosen as polynomial, then $\omega$ is called a weight of
polynomial type.

\par

The function $v$ is called \emph{submultiplicative}, if
it is even and \eqref{e1.1} holds for $\omega =v$. We notice that \eqref{e1.1}
implies that if $v$ is submultiplicative on $\rr d$, then there is a constant $c>0$ such that
$v(x)\ge c$ when $x\in \rr d$.

\par

We let $\mascP _E(\rr d)$ be the set of all moderate weights on
$\rr d$, and $\mascP (\rr d)$ be the subset of $\mascP  _E(\rr d)$
which consists of all polynomially moderate functions on $\rr d$.
We also let $\mascP _{E,s}(\rr d)$ ($\mascP _{E,s}^0(\rr d)$) be the set of
all weights $\omega$ in $\rr d$ such that
\begin{equation}\label{Eq:ModWeightProp}
\omega (x_1+x_2)\lesssim \omega (x_1) e^{r|x_2|^{\frac 1s}},\quad x_1,x_2\in \rr d .
\end{equation}
for some $r>0$ (for every $r>0$). We have
\begin{alignat*}{3}
\mascP &\subseteq \mascP _{E,s_1}^0\subseteq \mascP _{E,s_1}\subseteq
\mascP _{E,s_2}^0\subseteq \mascP _E & \quad &\text{when} & s_2&<s_1 
\intertext{and}
\mascP _{E,s} &= \mascP _E & \quad &\text{when} &\quad  s&\le 1 ,
\end{alignat*}
where the last equality follows from the fact that if $\omega \in \mascP _E(\rr d)$
($\omega \in \mascP _E^0(\rr d)$), then
\begin{equation}\label{Eq:ModWeightPropCons}
\omega (x+y)\lesssim \omega (x) e^{r|y|^{\frac 1s}}
\quad \text{and}\quad
e^{-r|x|}\le \omega (x)\lesssim e^{r|x|},\quad
x,y\in \rr d
\end{equation}
hold true for some $r>0$ (for every $r>0$) (cf. \cite{Gc2.5}).

%

\par

In some situations we shall consider a more general class of weights compared
to $\mascP _E$. (Cf. \cite[Definition 1.1]{To18}.)

\par

\begin{defn}\label{Def:PQweights}
The set $\mascP  _{Q}(\rr d)$ consists of all weights
$\omega$ on $\rr d$ such that
\begin{multline}\label{Eq:ModRelax}
\omega (x)^2\lesssim \omega (x+y)\omega (x-y)\lesssim \omega (x)^2
\\[1ex]
\text{when}\quad
Rc \le |x|\le \frac c{|y|} ,\quad R\ge 2,
\end{multline}
\begin{equation}\label{Gaussest}
e^{-r|x|^2}\lesssim \omega (x)\lesssim e^{r|x|^2},
\end{equation}
holds for some positive constants $c$ and $r$.
\end{defn}

\par


\par

\subsection{Gelfand-Shilov spaces}\label{subsec1.2}

\par

First of all let us fix $0<h,s,t\in \mathbf R$ for the whole subsection. Then we denote the set of 
 all functions $f\in C^\infty (\rr d)$ such that
\begin{equation}\label{gfseminorm}
\nm f{\mathcal S_{t,h}^s}\equiv \sup \frac {|x^\beta \partial ^\alpha
f(x)|}{h^{|\alpha  + \beta |}\alpha !^s\, \beta !^t} < \infty
\end{equation}
by  $\mathcal S_{s,h}(\rr d)$. Here the supremum is taken over all $\alpha ,\beta \in
\mathbf N^d$ and $x\in \rr d$.

\par

One immediately gets, that $\mathcal S_{s,h}^t$ is a Banach space which is contained in $\mascS$.
Moreover $\mathcal S_{s,h}^t$ increases with $h$, $s$ and $t$ and we have the inclusion 
$\mathcal S_{s,h}^t\hookrightarrow \mathscr S$. We use the notation $A\hookrightarrow B$ for topological
spaces $A$ and $B$ satisfying $A\subseteq B$ with continuous embeddings.
Furthermore for sufficiently large $s,t>\frac 12$, or $s =t=\frac 12$ and $h$
$\maclS _{t,h}^s$ contains all finite linear
combinations of the Hermite functions. On account of the density of such linear combinations
in $\mathscr S$ and in $\maclS _{t,h}^s$, the dual $(\mathcal S_{t,h}^s)'(\rr d)$ of $\mathcal S_{t,h}^s(\rr d)$ is
a Banach space which contains $\mathscr S'(\rr d)$, for such choices
of $s$ and $t$.

\par

The inductive and projective limits respectively of $\mathcal S_{t,h}^s(\rr d)$
are called  \emph{Gelfand-Shilov spaces} of Beurling respectively Roumieu
type and are denoted by $\mathcal S_{t}^s(\rr d)$ and
$\Sigma _{t}^s(\rr d)$. Hence
\begin{equation}\label{GSspacecond1}
\mathcal S_t^{s}(\rr d) = \bigcup _{h>0}\mathcal S_{t,h}^s(\rr d)
\quad \text{and}\quad \Sigma _t^{s}(\rr d) =\bigcap _{h>0}
\mathcal S_{t,h}^s(\rr d),
\end{equation}
where the topology for $\mathcal S_t^{s}(\rr d)$ is the strongest
possible one such that the inclusion map from $\mathcal S_{t,h}^s
(\rr d)$ to $\mathcal S_t^{s}(\rr d)$ is continuous, for every choice 
of $h>0$. Equipped with the seminorms $\nm \cdo{\mathcal S_{t,h}^s}$, $h>0$
the space $\Sigma _t^s(\rr d)$ is a Fr{\'e}chet space. Additionally
$\Sigma _t^s(\rr d)\neq \{ 0\}$, if and only if $s+t\ge 1$ and
$(s,t)\neq (\frac 12,\frac 12)$, and $\maclS _t^s(\rr d)\neq \{ 0\}$, if and only
if $s+t\ge 1$.

\medspace

The \emph{Gelfand-Shilov distribution spaces} $(\mathcal S_t^{s})'(\rr d)$
and $(\Sigma _t^s)'(\rr d)$ are the projective and inductive limit
respectively of $(\mathcal S_{t,h}^s)'(\rr d)$.  This implies that
\begin{equation}\tag*{(\ref{GSspacecond1})$'$}
(\mathcal S_t^s)'(\rr d) = \bigcap _{h>0}(\mathcal S_{t,h}^s)'(\rr d)\quad
\text{and}\quad (\Sigma _t^s)'(\rr d) =\bigcup _{h>0}(\mathcal S_{t,h}^s)'(\rr d).
\end{equation}
Note,  that $(\mathcal S_t^s)'(\rr d)$
is the dual of $\mathcal S_t^s(\rr d)$, and $(\Sigma _t^s)'(\rr d)$
is the dual of $\Sigma _t^s(\rr d)$ as proved in \cite{GS}. This is also true
in topological sense. In case $s=t$ we
set
$$
\maclS _s=\maclS _s^s,\quad \maclS _s'=(\maclS _s^s)',\quad
\Sigma _s=\Sigma _s^s
\quad \text{and}\quad
\Sigma _s'=(\Sigma _s^s)'.
$$

\par

For every admissible $s,t>0$ and $\ep >0$ the next embeddings are true:
\begin{equation}\label{GSembeddings}
\begin{alignedat}{2}
\Sigma _t^s (\rr d) &\hookrightarrow &
\maclS _t^s(\rr d) &\hookrightarrow  \Sigma _{t+\ep}^{s+\ep}(\rr d)
\\[1ex]
\quad \text{and}\quad
(\Sigma _{t+\ep}^{s+\ep})' (\rr d) &\hookrightarrow & (\maclS _t^s)'(\rr d)
&\hookrightarrow  (\Sigma _t^s)'(\rr d).
\end{alignedat}
\end{equation}

\par

We recall that Fourier transform of $f\in L^1(\rr d)$ is defined by
$$
(\mathscr Ff)(\xi )= \widehat f(\xi ) \equiv (2\pi )^{-\frac d2}\int _{\rr
{d}} f(x)e^{-i\scal  x\xi }\, dx,
$$
where $\scal \cdo \cdo$ is the usual
scalar product on $\rr d$. The map $\mathscr F$ extends 
uniquely to homeomorphisms on $\mathscr S'(\rr d)$,
from $(\mathcal S_t^s)'(\rr d)$ to $(\mathcal S_s^t)'(\rr d)$ and
from $(\Sigma _t^s)'(\rr d)$ to $(\Sigma _s^t)'(\rr d)$. Furthermore,
$\mascF$ restricts to
homeomorphisms on $\mathscr S(\rr d)$, from
$\mathcal S_t^s(\rr d)$ to $\mathcal S_s^t(\rr d)$ and
from $\Sigma _t^s(\rr d)$ to $\Sigma _s^t(\rr d)$,
and to a unitary operator on $L^2(\rr d)$. If we replace the Fourier transform by a partial
Fourier transform similar results hold true
for $s=t$.

\par

Gelfand-Shilov spaces and their distribution spaces can be characterized in a convenient
way by means of estimates of the short-time Fourier
transforms, see e.{\,}g. \cite{GZ,To18,To22}. Before stating this result, we recall the definition of
the short-time Fourier transform.

\par

For a fixed $\phi \in \maclS _s '(\rr d)$ the \emph{short-time
Fourier transform} $V_\phi f$ of $f\in \maclS _s '
(\rr d)$ with respect to the \emph{window function} $\phi$ is
the Gelfand-Shilov distribution on $\rr {2d}$, defined by
$$
V_\phi f(x,\xi ) \equiv  (\mascF _2 (U(f\otimes \phi )))(x,\xi ) =
\mascF (f \, \overline {\phi (\cdo -x)})(\xi
),
$$
where $(UF)(x,y)=F(y,y-x)$. Here $\mascF _2F$ denotes the partial Fourier transform
of $F(x,y)\in \maclS _s'(\rr {2d})$ with respect to the $y$ variable.
In case $f ,\phi \in \maclS _s (\rr d)$ the short-time Fourier transform of $f$ can be written as  
$$
V_\phi f(x,\xi ) = (2\pi )^{-\frac d2}\int f(y)\overline {\phi
(y-x)}e^{-i\scal y\xi}\, dy .
$$

\par

The characterisation of Gelfand-Shilov functions
and their distributions are formulated in the next two  propositions.
The proof of the characterizations can be found in
e.{\,}g. \cite{GZ,To22} (cf. \cite[Theorem 2.7]{GZ}) and in \cite{To18,To22}:

\par

\begin{prop}\label{stftGelfand2}
Let $s,t,s_0,t_0>0$ be such that $s_0+t_0\ge 1$, 
$s_0\le s$ and $t_0\le t$. Also let
$\phi \in \mathcal S_{t_0}^{s_0}(\rr d)\setminus 0$ and
$f\in (\mathcal S_{t_0}^{s_0})'(\rr d)$.
Then the following is true:
\begin{enumerate}
\item $f\in  \mathcal S_{t}^s(\rr d)$, if and only if
\begin{equation}\label{stftexpest2}
|V_\phi f(x,\xi )| \lesssim  e^{-r (|x|^{\frac 1t}+|\xi |^{\frac 1s})},
\end{equation}
holds for some $r > 0$;

\vrum

\item if in addition $(s_0,t_0)\neq (\frac 12,\frac 12)$ and $\phi \in
\Sigma _{t_0}^{s_0}(\rr d)$, then
$f\in  \Sigma _{t}^{s}(\rr d)$, if and only if \eqref{stftexpest2}
holds for every $r > 0$.
\end{enumerate}
\end{prop}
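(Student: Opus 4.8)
The plan is to realise the proposition as the statement that the short-time Fourier transform $V_\phi$ takes $\maclS_t^s(\rd)$ onto the space of functions on $\rdd$ obeying the super-exponential bound \eqref{stftexpest2}, and that this bound is essentially independent of the admissible window $\phi$. Concretely I would prove the two implications of (1) directly, and then recover (2) by tracking, in each estimate, whether a single rate $r>0$ or all rates $r>0$ are in play; the hypotheses $(s_0,t_0)\neq(\tfrac12,\tfrac12)$ and $\phi\in\Sigma_{t_0}^{s_0}$ enter only to guarantee that the Beurling windows form a non-trivial class, so that $V_\phi\psi$ below is meaningful.

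\emph{Necessity.} Assume first $f\in\maclS_t^s$. Using $V_\phi f(x,\xi)=(2\pi)^{-d/2}\int f(y)\overline{\phi(y-x)}e^{-i\scal y\xi}\,dy$, I would obtain decay in $\xi$ by repeated integration by parts: each $\partial_y^\alpha$ falling on $g_x(y)=f(y)\overline{\phi(y-x)}$ is controlled, via Leibniz together with the seminorm estimates for $f\in\maclS_t^s$ and $\phi\in\maclS_{t_0}^{s_0}\subseteq\maclS_{t}^{s}$, by a factor $h^{|\alpha|}\alpha!^{s}$; optimising over $\alpha$ with $|\alpha|\sim|\xi|^{1/s}$ then yields $e^{-r|\xi|^{1/s}}$. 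The decay in $x$ comes from the simultaneous decay of the two factors: the seminorm bounds give $|f(y)|\lesssim e^{-r|y|^{1/t}}$ and $|\phi(y-x)|\lesssim e^{-r|y-x|^{1/t_0}}\lesssim e^{-r'|y-x|^{1/t}}$ (here $t_0\le t$ is used), and splitting the integral at $|y|=|x|/2$ and invoking $|x|^{1/t}\le C(|y|^{1/t}+|y-x|^{1/t})$ produces $e^{-r''|x|^{1/t}}$ after absorbing an integrable remainder. Combining the two gives \eqref{stftexpest2}. In the Roumieu case the rate is some $r>0$; in the Beurling case the same computation upgrades ``some'' to ``every'', since then the seminorm and decay estimates for $f$ and $\phi$ hold for every $h$ and every $r$.

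\emph{Sufficiency.} Conversely, assume \eqref{stftexpest2}. Here I would invoke the inversion formula, understood weakly in $(\maclS_{t_0}^{s_0})'$,
$$
f(y)=\frac{(2\pi)^{-d/2}}{\nm{\phi}{L^2}^2}\iint_{\rdd}V_\phi f(x,\xi)\,e^{i\scal y\xi}\phi(y-x)\,dx\,d\xi ,
$$
then differentiate under the integral and expand $\partial_y^\alpha[e^{i\scal y\xi}\phi(y-x)]$ and $y^\beta=((y-x)+x)^\beta$ by Leibniz and the binomial theorem. The terms $(y-x)^{\beta'}(\partial^{\alpha'}\phi)(y-x)$ are bounded by seminorms of $\phi$, contributing $\alpha!^{s_0}\beta!^{t_0}\le\alpha!^{s}\beta!^{t}$, while the remaining monomials $|\xi^{\gamma}|$ and $|x^{\beta''}|$ are absorbed by \eqref{stftexpest2} through the key estimates $\int|\xi|^{k}e^{-r|\xi|^{1/s}}\,d\xi\lesssim C^{k}k!^{s}$ and $\int|x|^{m}e^{-r|x|^{1/t}}\,dx\lesssim C^{m}m!^{t}$. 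Careful bookkeeping of the binomial coefficients then yields $|y^\beta\partial^\alpha f(y)|\lesssim h^{|\alpha|+|\beta|}\alpha!^{s}\beta!^{t}$, i.e. $f\in\maclS_t^s$; the Beurling statement follows by letting $r\to\infty$, so that $h$ may be taken arbitrarily small.

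\emph{Window independence and main obstacle.} To make both directions work for an arbitrary admissible $\phi$ rather than a single convenient Gaussian or Hermite window, I would use the pointwise convolution inequality $|V_{\phi_1}f|\le|\scal{\phi_2}{\phi_1}|^{-1}\,|V_{\phi_2}f|*|V_{\phi_1}\phi_2|$ on $\rdd$, noting that for two Gelfand-Shilov windows $V_{\phi_1}\phi_2$ itself satisfies \eqref{stftexpest2} by the necessity step. The crux, and the step I expect to be most delicate, is the accompanying convolution lemma: that the class of weights $e^{-r(|x|^{1/t}+|\xi|^{1/s})}$ is stable under convolution, with a possibly smaller rate $r'$, and that the factorial estimates above combine to $\alpha!^s\beta!^t$ with a uniform constant $h$. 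Both rest on the subadditivity inequalities $|X+Y|^{1/u}\le C(|X|^{1/u}+|Y|^{1/u})$ and on controlling how the rate $r$ degrades; handling the Beurling case additionally requires that these degradations preserve the ``for every $r$'' quantifier, which is precisely where non-triviality of $\Sigma_{t_0}^{s_0}$, i.e. $(s_0,t_0)\neq(\tfrac12,\tfrac12)$, is needed.
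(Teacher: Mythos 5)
The paper offers no proof of this proposition: it is quoted from the literature, with the argument attributed to \cite{GZ} (cf.\ Theorem 2.7 there) and \cite{To18,To22}. Your sketch — integration by parts and an integral splitting for the necessity of \eqref{stftexpest2}, the inversion formula combined with the moment estimates $\int |\xi |^k e^{-r|\xi |^{1/s}}\, d\xi \lesssim C^k k!^s$ for sufficiency, and the change-of-window convolution inequality together with stability of the weights $e^{-r(|x|^{1/t}+|\xi |^{1/s})}$ under convolution for window independence — is exactly the standard argument of those references and is correct as an outline, including the bookkeeping of the ``some $r$'' versus ``every $r$'' quantifiers that separates the Roumieu case (1) from the Beurling case (2).
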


\par

\begin{prop}\label{stftGelfand2dist}
Let $s,t,s_0,t_0>0$ be such that $s_0+t_0\ge 1$, 
$s_0\le s$ and $t_0\le t$. Also let
$\phi \in \mathcal S_{t}^{s}(\rr d)\setminus 0$ and
$f\in (\mathcal S_{t_0}^{s_0})'(\rr d)$.
Then the following is true:
\begin{enumerate}
\item $f\in  (\mathcal S_{t}^s)'(\rr d)$, if and only if
\begin{equation}\label{stftexpest2Dist}
|V_\phi f(x,\xi )| \lesssim  e^{r(|x|^{\frac 1t}+|\xi |^{\frac 1s})},
\end{equation}
holds for every $r > 0$;

\vrum

\item if in addition $(s_0,t_0)\neq (\frac 12,\frac 12)$ and $\phi \in
\Sigma _{t_0}^{s_0}(\rr d)$, then
$f\in  (\Sigma _{t}^{s})'(\rr d)$, if and only if \eqref{stftexpest2Dist}
holds for some $r > 0$.
\end{enumerate}
\end{prop}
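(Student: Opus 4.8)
The plan is to derive the distribution characterisation from the characterisation of Gelfand--Shilov \emph{functions} in Proposition \ref{stftGelfand2} by a duality argument resting on Moyal's identity and the reconstruction formula for the short-time Fourier transform. Throughout, write $\Phi _{x,\xi }(y)=e^{\im \scal y\xi }\phi (y-x)$ for the time-frequency shifted window, so that directly from the definition $V_\phi f(x,\xi )=(2\pi )^{-d/2}\scal f{\Phi _{x,\xi }}$, where the bracket is the extension of the $L^2$ form to the dual pairing between $(\maclS _{t_0}^{s_0})'(\rr d)$ and $\maclS _{t_0}^{s_0}(\rr d)$. The two analytic ingredients are, first, seminorm estimates for the shifted windows $\Phi _{x,\xi }$, and second, Moyal's identity
$$
\scal{V_\phi f}{V_\phi g}_{L^2(\rr{2d})}=\nm{\phi}{L^2}^2\,\scal fg ,
$$
valid after extension by duality, which for $g$ in the relevant test space re-expresses the action of $f$ as an absolutely convergent double integral of $V_\phi f$ against $V_\phi g$. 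The legitimacy of this identity at the distributional level is secured by the density of finite Hermite combinations in $\maclS _{t_0}^{s_0}$ noted in Subsection \ref{subsec1.2}.

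For the necessity in part (1), assume $f\in (\maclS _t^s)'$. By $(\ref{GSspacecond1})'$ this is equivalent to $f\in (\maclS _{t,h}^s)'$ for \emph{every} $h>0$, so for each $h$ there is $C_h>0$ with $|\scal f{\Phi _{x,\xi }}|\le C_h\nm {\Phi _{x,\xi }}{\maclS _{t,h}^s}$. Expanding $\partial ^\alpha \Phi _{x,\xi }$ by Leibniz' rule, splitting $y^\beta =((y-x)+x)^\beta $ by the binomial theorem, and using $\phi \in \maclS _{t,h_0}^s$ for some fixed $h_0$, one bounds $\nm {\Phi _{x,\xi }}{\maclS _{t,h}^s}$ by a product of two series of the types $\sum _\gamma |\xi |^{|\gamma |}/\gamma !^{s}$ and $\sum _{\beta '}|x|^{|\beta '|}/\beta '!^{t}$; the factorial gains $\alpha !^{s}$ and $\beta !^{t}$ in the seminorm are exactly what make these series converge and sum to $e^{r(h)|\xi |^{1/s}}$ and $e^{r(h)|x|^{1/t}}$, with a rate $r(h)$ that decreases to $0$ as $h\to \infty $. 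Since $f$ is bounded on every building block, this yields $|V_\phi f(x,\xi )|\lesssim e^{r(h)(|x|^{1/t}+|\xi |^{1/s})}$ for arbitrarily large $h$, hence \eqref{stftexpest2Dist} for every $r>0$. The same computation proves necessity in part (2): there $f\in (\Sigma _t^s)'=\bigcup _h(\maclS _{t,h}^s)'$ is boundedness on a \emph{single} $\maclS _{t,h}^s$, which returns \eqref{stftexpest2Dist} for one $r$ rather than all $r$.

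For the sufficiency in part (1), assume \eqref{stftexpest2Dist} for every $r>0$ and fix $h>0$; it suffices to show $f\in (\maclS _{t,h}^s)'$. For $g\in \maclS _{t,h}^s$ a quantitative form of Proposition \ref{stftGelfand2}(1), applied with window $\phi $ and indices $(s,t)$ (admissible since $s+t\ge s_0+t_0\ge 1$), gives $|V_\phi g(x,\xi )|\lesssim \nm g{\maclS _{t,h}^s}\,e^{-r'(|x|^{1/t}+|\xi |^{1/s})}$ for some $r'>0$ depending only on $h$. Choosing $r<r'$ in \eqref{stftexpest2Dist} and inserting both bounds into Moyal's identity, the integrand is dominated by $e^{-(r'-r)(|x|^{1/t}+|\xi |^{1/s})}$, which is integrable on $\rr{2d}$, so $|\scal fg|\lesssim \nm g{\maclS _{t,h}^s}$ and $f\in (\maclS _{t,h}^s)'$; as $h$ is arbitrary, $f\in (\maclS _t^s)'$. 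Part (2) runs identically with the quantifiers interchanged: from \eqref{stftexpest2Dist} for \emph{one} rate $r_0$ one must produce a \emph{single} $h$ with decay rate $r'>r_0$, whence $f\in (\maclS _{t,h}^s)'\subseteq (\Sigma _t^s)'$.

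The main obstacle is the quantitative bookkeeping that makes the quantifiers match, and here the two cases diverge. In part (1) one must verify that the rate $r(h)$ in the shift estimate genuinely tends to $0$ as $h\to \infty $, so that ``for every $h$'' becomes ``for every $r$''. In part (2) the delicate point is that the decay rate $r'$ of $V_\phi g$ must be pushed above the prescribed $r_0$; if $\phi$ were merely Roumieu, lying in a fixed $\maclS _{t,h_0}^s$, the window would cap $r'$ and the argument could fail. The hypothesis $\phi \in \Sigma _{t_0}^{s_0}=\bigcap _{h_2}\maclS _{t_0,h_2}^{s_0}$ removes this cap, since $\phi $ then belongs to $\maclS _{t,h_2}^s$ for arbitrarily small $h_2$, allowing $r'\to \infty $; and the requirement $(s_0,t_0)\neq (\tfrac 12,\tfrac 12)$ is precisely what guarantees, together with $s_0+t_0\ge 1$, that such a nonzero Beurling window exists. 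The remaining routine labour is the derivation of the power-series-to-exponential estimates and the verification that the implicit constant in Proposition \ref{stftGelfand2} is controlled by $\nm g{\maclS _{t,h}^s}$ with $r'$ depending only on $h$, which is what the proof of that proposition in fact delivers.
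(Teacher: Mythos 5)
The paper does not prove Proposition \ref{stftGelfand2dist}; it is quoted from the literature (the text points to \cite{GZ,To18,To22}, cf.\ \cite[Theorem~2.7]{GZ}), so there is no in-paper proof to compare against. Your argument --- seminorm estimates on the time-frequency shifts $\Phi_{x,\xi}$ for the two necessity directions, and Moyal's identity combined with a quantitative form of Proposition \ref{stftGelfand2} for the two sufficiency directions, with the quantifier bookkeeping in $h$ governing the ``for every $r$'' versus ``for some $r$'' dichotomy and the Beurling window removing the cap on the decay rate $r'$ --- is exactly the standard proof given in those references, and it is correct in outline.
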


\par

\begin{rem}\label{Rem:STFT}
For the short-time Fourier transform the following continuity results hold: For every $s>0$,
the mapping $(f,\phi)\mapsto V_\phi f$ is continuous from
$\maclS _s(\rr d)\times \maclS _s(\rr d)$ to $\maclS _s(\rr {2d})$
and extends uniquely to continuous mappings from
$\maclS _s'(\rr d)\times \maclS _s'(\rr d)$ to $\maclS _s'(\rr {2d})$.
The same is true if we replace each $\maclS _s$ by
$\mascS$ or by $\Sigma _s$ (cf. e.{\,}g. \cite{Te2,To22}).
\end{rem}

\par







\subsection{Modulation spaces}\label{subsec1.3}

\par

In the whole subsection we fix some $\phi \in \Sigma _1(\rr d)\setminus 0$, $p,q\in (0,\infty ]$
and $\omega \in\mascP _E(\rr {2d})$. Then the
\emph{modulation space} $M^{p,q}_{(\omega )}(\rr d)$ is defined as the set of all
$f\in \Sigma _1'(\rr d)$ such that 
\begin{equation}\label{modnorm}
\nm f{M^{p,q}_{(\omega )}}\equiv \Big (\int \Big (\int |V_\phi f(x,\xi
)\omega (x,\xi )|^p\, dx\Big )^{q/p}\, d\xi \Big )^{1/q} <\infty
\end{equation}
holds.
We set $M^p_{(\omega )}=M^{p,p}_{(\omega )}$. Moreover we use the notion
 $M^{p,q}=M^{p,q}_{(\omega )}$
and $M^{p}=M^{p}_{(\omega )}$ if $\omega =1$.

\par

We summarize some well-known facts of Modulation spaces in the next Proposition. See 
\cite {Fe4,GaSa,Gc2,To20} for the proof. The conjugate exponent of $p$
is given by
$$
p'
=
\begin{cases}
\infty & \text{when}\ p\in (0,1],
\\[1ex]
\displaystyle{\frac p{p-1}} & \text{when}\ p\in (1,\infty ),
\\[2ex]
1 & \text{when}\ p=\infty \, .
\end{cases}
$$

\par

\begin{prop}\label{p1.4}
Let $p,q,p_j,q_j,r\in (0,\infty ]$ be such that $r\le \min (1,p,q)$,
$j=1,2$, let $\omega
,\omega _1,\omega _2,v\in\mascP _E(\rr {2d})$ be such that $\omega$
is $v$-moderate, $\phi \in M^r_{(v)}(\rr d)\setminus 0$, and let $f\in
\Sigma _1'(\rr d)$. Then the following is true:
\begin{enumerate}
\item $f\in
M^{p,q}_{(\omega )}(\rr d)$ if and only if \eqref {modnorm} holds,
i.{\,}e. $M^{p,q}_{(\omega )}(\rr d)$ is independent of the choice of
$\phi$. Moreover, $M^{p,q}_{(\omega )}$ is a quasi-Banach space under the
quasi-norm in \eqref{modnorm} and even a Banach space if $p,q \geq 1$.
Different choices of $\phi$ give rise to
equivalent (quasi-)norms;

\vrum

\item if  $p_1\le p_2$,
$q_1\le q_2$ and $\omega _2\le C\omega _1$ for some constant $C$, then
\begin{alignat*}{3}
\Sigma _1(\rr d)&\subseteq &M^{p_1,q_1}_{(\omega _1)}(\rr
d) &\subseteq  & M^{p_2,q_2}_{(\omega _2)}(\rr d)&\subseteq 
\Sigma _1'(\rr d).
\end{alignat*}
\end{enumerate}
\end{prop}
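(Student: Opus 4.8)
The plan is to transport every statement to phase space via the short-time Fourier transform and to study the weighted mixed Lebesgue space $L^{p,q}_{(\omega )}(\rr {2d})$ into which $f\mapsto V_\phi f$ maps. Two structural ingredients drive everything. The first is the \emph{change-of-window inequality}
\[
|V_\phi f(X)|\lesssim (|V_\psi f|*|V_\psi \phi |)(X),\qquad X\in \rr {2d},
\]
valid for admissible windows $\psi ,\phi$ with $\scal \psi \psi \neq 0$, which is a consequence of the STFT reproducing formula. The second is a \emph{weighted quasi-Young inequality}: since $r\le \min (1,p,q)$ one has $\nm{G*H}{L^{p,q}}\lesssim \nm G{L^{p,q}}\nm H{L^r}$, and combining this with the moderateness $\omega (X)\lesssim \omega (Y)v(X-Y)$ from \eqref{e1.1} gives
\[
\nm{(G*H)\omega }{L^{p,q}}\lesssim \nm{G\omega }{L^{p,q}}\, \nm{Hv}{L^r}.
\]

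For part (1) I would first settle window independence and the equivalence of quasi-norms. Applying the two displayed estimates with $G=|V_\psi f|$ and $H=|V_\psi \phi |$ yields $\nm f{M^{p,q}_{(\omega )}}\lesssim \nm{V_\psi f\cdot \omega }{L^{p,q}}\, \nm{V_\psi \phi \cdot v}{L^r}$, where the last factor is finite precisely because $\phi \in M^r_{(v)}$. Interchanging the roles of $\phi$ and $\psi$ gives the reverse bound, so \eqref{modnorm} is independent of the window up to equivalent quasi-norms. For the quasi-Banach property, note that $f\mapsto V_\phi f$ is, for fixed $\phi$, a quasi-isometry of $M^{p,q}_{(\omega )}$ onto a subspace of the complete space $L^{p,q}_{(\omega )}(\rr {2d})$, which is a genuine Banach space when $p,q\ge 1$ since then the norm is subadditive. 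Completeness of $M^{p,q}_{(\omega )}$ then follows by the standard transfer argument: a Cauchy sequence $(f_n)$ produces a Cauchy sequence $(V_\phi f_n)$ converging to some $F\in L^{p,q}_{(\omega )}$, the inversion formula identifies $F$ with $V_\phi f$ for a unique $f\in \Sigma _1'(\rr d)$, and $f_n\to f$ in the quasi-norm.

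For part (2) the two outer inclusions rest on the Gelfand--Shilov characterisations. If $f\in \Sigma _1(\rr d)=\Sigma _1^1(\rr d)$, then Proposition \ref{stftGelfand2}(2) gives $|V_\phi f(x,\xi )|\lesssim e^{-r(|x|+|\xi |)}$ for every $r>0$, while \eqref{Eq:ModWeightPropCons} bounds $\omega _1$ by $e^{r_0|X|}$ for some fixed $r_0$; choosing $r>r_0$ makes $V_\phi f\cdot \omega _1$ decay exponentially, so $f\in M^{p_1,q_1}_{(\omega _1)}$, with continuity of the embedding coming from the control of the decay constant by the Gelfand--Shilov seminorms. The inclusion $M^{p_2,q_2}_{(\omega _2)}\subseteq \Sigma _1'(\rr d)$ is built into the definition, and its continuity follows from the growth bound in Proposition \ref{stftGelfand2dist}. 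The substantive step is the middle inclusion: since $\omega _2\le C\omega _1$ the weights are harmless, but one \emph{cannot} simply invoke $L^{p_1}\subseteq L^{p_2}$, which fails for functions on $\rr {2d}$. Instead, using a window $\phi \in \Sigma _1$ (legitimate after part (1)) for which $V_\phi \phi$ is rapidly decaying, I would exploit the reproducing identity $V_\phi f=\nm \phi 2^{-2}\, V_\phi f*V_\phi \phi$, so that an exponent-raising (quasi-)Young inequality upgrades $L^{p_1,q_1}$ control of $V_\phi f$ to $L^{p_2,q_2}$ control, giving $\nm f{M^{p_2,q_2}_{(\omega _2)}}\lesssim \nm f{M^{p_1,q_1}_{(\omega _1)}}$.

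The main obstacle is exactly this exponent increase in the middle inclusion. For the Banach range $p_1,q_1\ge 1$ the classical Young inequality $\nm{G*H}{L^{p_2}}\lesssim \nm G{L^{p_1}}\nm H{L^a}$ with $a^{-1}=1-p_1^{-1}+p_2^{-1}\in [0,1]$ applies at once, since the rapidly decaying kernel $V_\phi \phi$ lies in $L^a$. In the quasi-Banach range $p_1<1$ or $q_1<1$ this form of Young's inequality is unavailable, and one must instead pass through a Wiener-amalgam description of the STFT image: the reproducing formula bounds the local supremum of $V_\phi f$ by a local $L^{p_1,q_1}$-average, after which the lattice inclusion $\ell ^{p_1}\hookrightarrow \ell ^{p_2}$ legitimately raises the exponents. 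Carrying this out uniformly for all $p,q\in (0,\infty ]$, together with the role of the auxiliary parameter $r\le \min (1,p,q)$ in every convolution estimate, is the delicate part of the argument.
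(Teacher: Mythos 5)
The paper does not prove this proposition at all: it is stated as a summary of known facts with the proof deferred to \cite{Fe4,GaSa,Gc2,To20}, so your attempt can only be compared with the standard arguments in those references. For the Banach range $p,q\ge 1$ your outline matches them: change of window via $|V_\phi f|\lesssim |V_\psi f|*|V_\psi \phi |$, weighted Young's inequality, and transfer of completeness through $f=\nm \phi{L^2}^{-2}V_\phi ^*F$ and the projection $P_\phi$. Part (2) is also organised correctly, including the observation that the middle inclusion must go through the reproducing formula rather than a (false) inclusion $L^{p_1}\subseteq L^{p_2}$.

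The genuine gap is in part (1) for the quasi-Banach range. Your second ``structural ingredient'', the inequality $\nm {G*H}{L^{p,q}}\lesssim \nm G{L^{p,q}}\nm H{L^r}$ for $r\le \min (1,p,q)$, is false as a statement about Lebesgue spaces whenever $r<1$: already for $G=H=\chi _{[0,\ep ]^{2d}}$ one has $\nm {G*H}{L^{p,q}}/(\nm G{L^{p,q}}\nm H{L^r})\asymp \ep ^{2d(1-1/r)}\to \infty$ as $\ep \to 0$, and for $p<1$ the space $L^p$ is not even contained in $L^1_{loc}$, so the convolution need not be defined. The inequality holds for the relevant $G=|V_\psi f|$ and $H=|V_\psi \phi |$ only because short-time Fourier transforms lie in Wiener amalgam spaces $W(L^\infty ,L^{p,q})$, where local suprema can be summed in $\ell ^r$; establishing that amalgam membership for $f\in M^{p,q}_{(\omega )}$ is precisely the hard core of \cite{GaSa}. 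You invoke the amalgam detour, correctly, for the exponent-raising step in part (2), but the same detour is already indispensable for the window-independence, completeness, and quasi-norm-equivalence claims of part (1); as written, those rest on a false lemma. Two smaller points: the window hypothesis is $\phi \in M^r_{(v)}\setminus 0$ rather than $\phi \in \Sigma _1\setminus 0$, so the finiteness of $\nm {V_\psi \phi \cdot v}{L^r}$ cannot be read off from Proposition \ref{stftGelfand2} but must itself be bootstrapped from a fixed canonical (e.g.\ Gaussian) window; and the completeness transfer needs $P_\phi$ to be bounded on $L^{p,q}_{(\omega )}$, which is again the same convolution estimate and hence again requires the amalgam machinery when $\min (p,q)<1$.
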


\par

Because of Proposition \ref{p1.4}{\,}(1) we are allowed to be rather imprecise concerning
the choice of $\phi \in  M^r_{(v)}\setminus 0$ in
\eqref{modnorm}. For instance let $C>0$ be a constant and $\Omega$ be a
subset of $\Sigma _1'$. If we then write, that $\nm a{M^{p,q}_{(\omega )}}\le C$ for
every $a\in \Omega$,  we mean  that the inequality holds for some choice
of $\phi \in  M^r_{(v)}\setminus 0$ and every $a\in
\Omega$.
Additionally a similar inequality is true for any other choice
of $\phi \in  M^r_{(v)}\setminus 0$, 
although we may have to  replace $C$ by another constant.

\par

We refer to \cite {Fe4,FG1,FG2,FG4,GaSa,Gc2,RSTT,To20}
for more facts about modulation spaces.

\par

\subsection{A broader family of modulation spaces}

\par
In this subsection we introduce a broader class of modulation spaces, by
imposing certain types of translation invariant solid BF-space norms on the short-time
Fourier transforms, cf. \cite{Fe4,Fe6,Fe8,FG1,FG2}.

\par

We recall that a quasi-norm $\| \cdo \|_{\mascB}$ of order $r \in (0,1]$ on the
vector-space $\mascB$ is a nonnegative functional on $\mascB$ which satisfies
\begin{alignat}{2}
 \nm {f+g}{\mascB} &\le 2^{\frac 1r-1}(\nm {f}{\mascB} + \nm {g}{\mascB}), &
\quad f,g &\in \mascB ,
\label{Eq:WeakTriangle1}
\\[1ex]
\| \alpha \cdot f \|_{\mascB} &= |\alpha| \cdot \|f\|_{\mascB},
& \quad \alpha &\in \mathbf{C},
\quad  f \in \mascB
\notag
\intertext{and}
   \|f\|_{\mascB} &= 0\quad  \Leftrightarrow \quad f=0. & &
\notag
\end{alignat}

\par

The vector space $\mascB$ is called a quasi-Banach space if it is a complete quasi-normed space.
If $\mascB$ is a quasi-Banach space with quasi-norm satisfying \eqref{Eq:WeakTriangle1}
then on account of \cite{Aik,Rol} there is an equivalent quasi-norm to $\nm \cdo {\mascB}$
which additionally satisfies
\begin{align}\label{Eq:WeakTriangle2}
\nm {f+g}{\mascB}^r \le \nm {f}{\mascB}^r + \nm {g}{\mascB}^r, 
\quad f,g \in \mascB .
\end{align}
From now on we always assume that the quasi-norm of the quasi-Banach space $\mascB$
is chosen in such way that both \eqref{Eq:WeakTriangle1} and \eqref{Eq:WeakTriangle2}
hold.

\par

\begin{defn}\label{bfspaces1}
Let $\mascB \subseteq L^r_{loc}(\rr d)$ be a quasi-Banach space
of order $r\in (0,1]$ which contains $\Sigma _1(\rr d)$ with continuous embedding,
and let $v _0\in\mascP _E(\rr d)$.
Then $\mascB$ is called a \emph{translation invariant
Quasi-Banach Function space on $\rr d$} (with respect to $v$), or \emph{invariant
QBF space on $\rr d$}, if there is a constant $C$ such
that the following conditions are fulfilled:
\begin{enumerate}
\item if $x\in \rr d$ and $f\in \mascB$, then $f(\cdo -x)\in
\mascB$, and 
\begin{equation}\label{translmultprop1}
\nm {f(\cdo -x)}{\mascB}\le Cv_0(x)\nm {f}{\mascB}\text ;
\end{equation}

\vrum

\item if  $f,g\in L^r_{loc}(\rr d)$ satisfy $g\in \mascB$ and $|f|
\le |g|$, then $f\in \mascB$ and
$$
\nm f{\mascB}\le C\nm g{\mascB}\text .
$$
\end{enumerate}
\end{defn}

\par

If the weight $v$ even is an element of $\mascP _{E,s}(\rr d)$
($\mascP _{E,s}^0(\rr d)$), then we call $\mascB$ of Definition \ref{bfspaces1}
an \emph{invariant QBF-space of Roumieu type (Beurling type)} of order $s$.

\par

By means of (2) in Definition \ref{bfspaces1} we know that $f\cdot h\in \mascB$ if $f\in
\mascB$ and $h\in L^\infty$ and additionally
\begin{equation}\label{multprop}
\nm {f\cdot h}{\mascB}\le C\nm f{\mascB}\nm h{L^\infty}.
\end{equation}
For $r=1$, the invariant QBF space $\mascB$ of Definition \ref{bfspaces1} becomes a Banach
space and  is called an
\emph{invariant BF-space} (with respect to $v$). Because of condition (2) a
translation invariant BF-space is a solid BF-space in the sense of
(A.3) in \cite{Fe6}. 
For each invariant BF-space $\mascB \subseteq L^1_{loc}(\rr d)$ we have
Minkowski's inequality, i.{\,}e.
\begin{equation}\label{Eq:MinkIneq}
\nm {f*\fy}{\mascB}\le C \nm {f}{\mascB}\nm \fy{L^1_{(v)}},
\qquad f\in \mascB ,\ \fy \in \Sigma _1 (\rr d)
\end{equation}
for some $C>0$ which is independent of
$f\in \mascB$ and $\fy \in \Sigma _1 (\rr d)$. The density of $\Sigma _1$
in $L^1_{(v)}$ provides that the definition of
$f*\fy$ extends uniquely to any $f\in \mascB$ and
$\fy \in L^1_{(v)}(\rr d)$. Hence \eqref{Eq:MinkIneq} is also true
for such $f$ and $\fy$.

\par

The following result shows that $v_0$ in Definition \ref{bfspaces1} can be replaced
by a submultiplicative weight $v$ such that \eqref{translmultprop1} is true with
$v$ in place of $v_0$ and the constant $C=1$, and such that
\begin{equation}\label{Eq:StrongSubMult}
v(x+y)\le v(x)v(y)
\quad \text{and}\quad
v(-x)=v(x),\qquad x,y\in \rr d.
\end{equation}

\par

\begin{prop}
Let $\mascB$ be an invariant BF-space on $\rr d$ with respect to
$v_0\in \mascP _E(\rr d)$.
Then there is a $v\in \mascP _E(\rr d)$ which satisfies \eqref{Eq:StrongSubMult}
and such that \eqref{translmultprop1} holds with $v$ in place of $v_0$, and 
$C=1$.
\end{prop}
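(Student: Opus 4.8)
The goal is to replace the $v_0$-moderate control in Definition~\ref{bfspaces1} by a genuinely submultiplicative and even weight $v$, while keeping the translation estimate \eqref{translmultprop1} with optimal constant $C=1$. The plan is to build $v$ directly from the translation operator on $\mascB$ itself, so that submultiplicativity and the sharp constant come for free from the semigroup structure of translations.

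First I would define, for $x\in\rr d$, the operator norm of the translation acting on $\mascB$, namely
\begin{equation*}
T(x)\equiv \sup_{f\in\mascB\setminus 0}\frac{\nm{f(\cdo -x)}{\mascB}}{\nm{f}{\mascB}}.
\end{equation*}
By condition~(1) of Definition~\ref{bfspaces1} we have $T(x)\le Cv_0(x)<\infty$ for every $x$, so $T$ is a well-defined positive function, and the identity at $x=0$ gives $T(0)=1$. The key algebraic point is that translation is a group action, $f(\cdo -x-y)=(f(\cdo -y))(\cdo -x)$, from which one reads off directly that $T(x+y)\le T(x)T(y)$; thus $T$ is already submultiplicative, and \eqref{translmultprop1} holds with $T$ in place of $v_0$ and with the optimal constant $C=1$ by the very definition of the supremum. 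To also get evenness I would symmetrise, setting $v(x)\equiv \max(T(x),T(-x))$, or equivalently $v(x)=(T(x)T(-x))^{1/2}$ up to harmless constants; either choice preserves submultiplicativity (here the symmetric geometric mean is the cleaner option since the product of two submultiplicative functions is submultiplicative) and yields $v(-x)=v(x)$, so that \eqref{Eq:StrongSubMult} holds. Since $v\ge \max(T(x),T(-x))\ge T(x)$ pointwise, \eqref{translmultprop1} continues to hold for $v$ with $C=1$.

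It remains to check that $v\in\mascP_E(\rr d)$, i.e. that $v$ is a moderate weight. The bounds $c\le v_0(x)^{-1}$-type estimates must be combined with a \emph{lower} bound on $T$: applying the translation estimate to $f(\cdo +x)$ in place of $f$ gives $\nm{f}{\mascB}\le T(x)\nm{f(\cdo +x)}{\mascB}$, hence $T(x)\ge 1/T(-x)$, so that $T(x)T(-x)\ge 1$ and $v(x)\ge 1$ is bounded below. For the two-sided moderateness one uses $T(x)\le Cv_0(x)$ together with the lower bound just obtained and the fact that $v_0$ is itself moderate; submultiplicativity of $v$ then supplies the moderateness inequality $v(x_1+x_2)\le v(x_1)v(x_2)$ with $v$ as its own control weight, which is exactly \eqref{e1.1} with $\omega=v$. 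The one genuinely delicate point, and the step I expect to require the most care, is the local integrability requirement $v,1/v\in L^\infty_{loc}$ built into the definition of a weight: one must argue that $x\mapsto T(x)$ is locally bounded above (which follows from $T(x)\le Cv_0(x)$ and $v_0\in L^\infty_{loc}$) and locally bounded below away from zero (which follows from $T(x)T(-x)\ge 1$ together with the local upper bound on $T(-x)$). Assembling these local estimates with submultiplicativity and evenness then shows $v$ is an admissible moderate weight, completing the proof.
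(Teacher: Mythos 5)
Your proposal is correct and follows essentially the same route as the paper: define $v_1(x)=\sup_{f\in\mascB}\nm{f(\cdo -x)}{\mascB}/\nm f{\mascB}$, read off submultiplicativity with $C=1$ from the group structure of translations, and symmetrise via $v(x)=\max(v_1(x),v_1(-x))$. The only difference is that you spell out the verification that $v$ is a locally bounded moderate weight (via $T(x)T(-x)\ge 1$ and $T\le Cv_0$), which the paper leaves implicit.
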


\par

\begin{proof}
Let
$$
v_1(x) \equiv
\sup _{f\in \mascB}
\left (
\frac {\nm {f(\cdo -x)}{\mascB}}{\nm f{\mascB}}
\right ) .
$$
Then
\begin{multline*}
v_1(x+y) = \sup _{f\in \mascB}
\left (
\frac {\nm {f(\cdo -x-y)}{\mascB}}{\nm {f(\cdo -y)}{\mascB}} 
\cdot
\frac {\nm {f(\cdo -y)}{\mascB}}{\nm f{\mascB}} 
\right )
\\[1ex]
\le
\sup _{f\in \mascB}
\left (
\frac {\nm {f(\cdo -x)}{\mascB}}{\nm f{\mascB}}
\right ) 
\cdot
\sup _{f\in \mascB}
\left (
\frac {\nm {f(\cdo -y)}{\mascB}}{\nm f{\mascB}}
\right ) = v_1(x)v_1(y).
\end{multline*}
The result now follows by letting
$$
v(x)=\max (v_1(x),v_1(-x)). \text{\phantom k \hspace{\stretch{1}}   \qedhere}
$$
\end{proof}

\par

From now on it is assumed that $v$ and $v_j$ are submutliplicative weights
if nothing else is stated.

\par

\begin{example}\label{Lpqbfspaces}
For $p,q\in [1,\infty ]$ the space $L^{p,q}_1(\rr {2d})$ consists of 
all $f\in L^1_{loc}(\rr {2d})$ such that
$$
\nm  f{L^{p,q}_1} \equiv \Big ( \int \Big ( \int |f(x,\xi )|^p\, dx\Big
)^{q/p}\, d\xi \Big )^{1/q} < \infty.
$$
Additionally $L^{p,q}_2(\rr {2d})$ is the set of all $f\in
L^1_{loc}(\rr {2d})$ such that
$$
\nm  f{L^{p,q}_2} \equiv \Big ( \int \Big ( \int |f(x,\xi )|^q\, d\xi
\Big )^{p/q}\, dx \Big )^{1/p} < \infty.
$$
Then $L^{p,q}_1$ and $L^{p,q}_2$ are
translation invariant BF-spaces with respect to $v=1$.
\end{example}

\par

For translation invariant BF-spaces we make the
following observation.

\par

\begin{prop}\label{p1.4BFA}
Assume that $v\in\mascP _E(\rr {d})$, and that $\mascB$ is an
invariant BF-space with respect to $v$ such that \eqref{Eq:MinkIneq}
holds true. Then the
convolution mapping $(\fy ,f)\mapsto \fy *f$ from $C_0^\infty (\rr
d)\times \mascB$ to $\mascB$ extends uniquely to a continuous
mapping from
$L^1_{(v )}(\rr d)\times \mascB$ to $\mascB$, and \eqref{Eq:MinkIneq}
holds true for any $f\in \mascB$ and $\fy \in L^1_{(v)}(\rr d)$.
\end{prop}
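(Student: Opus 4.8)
The plan is to obtain the statement from the Minkowski estimate \eqref{Eq:MinkIneq} on a dense subclass of $L^1_{(v)}$ by a bilinear extension-by-density argument, using only that $\mascB$ is complete and embeds continuously into $L^1_{loc}(\rr d)$. Throughout I would write the estimate in the equivalent form $\nm{\fy *f}{\mascB}\le C\nm f{\mascB}\nm \fy{L^1_{(v)}}$, valid for $\fy \in \Sigma _1(\rr d)$ and $f\in \mascB$ by \eqref{Eq:MinkIneq}. For a fixed $f\in \mascB$ this says that the linear map $T_f\colon \fy \mapsto \fy *f$ is bounded from $(\Sigma _1(\rr d),\nm \cdo {L^1_{(v)}})$ into $\mascB$, with operator norm at most $C\nm f{\mascB}$.

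First I would check that $\Sigma _1(\rr d)$, and likewise $C_0^\infty (\rr d)$, is dense in $L^1_{(v)}(\rr d)$. Since $v,1/v\in L^\infty _{loc}(\rr d)$, the norms $\nm \cdo {L^1_{(v)}}$ and $\nm \cdo {L^1}$ are equivalent on functions supported in any fixed compact set; hence truncating a given $\fy \in L^1_{(v)}$ to large balls and then regularising (by mollification to land in $C_0^\infty$, or by convolving with Gaussians to stay in $\Sigma _1$) produces approximants converging to $\fy$ in $L^1_{(v)}$. Because $\mascB$ is complete, $T_f$ extends uniquely to a bounded operator from $L^1_{(v)}(\rr d)$ to $\mascB$, still written $f\mapsto \fy *f$, which satisfies $\nm{\fy *f}{\mascB}\le C\nm f{\mascB}\nm \fy {L^1_{(v)}}$ for every $\fy \in L^1_{(v)}(\rr d)$; this is \eqref{Eq:MinkIneq} in the claimed generality. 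Letting $f$ vary, this bilinear bound shows that $(\fy ,f)\mapsto \fy *f$ is jointly continuous from $L^1_{(v)}(\rr d)\times \mascB$ to $\mascB$, and density together with continuity gives uniqueness of the extension.

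There remains the consistency point, namely that for $\fy \in C_0^\infty (\rr d)$ the extended operator returns the classical convolution. Here I would pass through the continuous embedding $\mascB \hookrightarrow L^1_{loc}(\rr d)$: if $\fy _n\in \Sigma _1(\rr d)$ with $\fy _n\to \fy$ in $L^1_{(v)}$, then $\fy _n*f$ converges to the extension in $\mascB$, hence in $L^1_{loc}$, while at the same time $\fy _n\to \fy$ in $L^1$ forces $\fy _n*f\to \fy *f$ in $L^1_{loc}$ by the classical Young inequality on compact sets. Uniqueness of limits in $L^1_{loc}$ then identifies the extension with the classical convolution on $C_0^\infty (\rr d)\times \mascB$, which is exactly the assertion that the map in the statement extends the convolution defined there.

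I expect the consistency step to be the main obstacle, and it is the reason the argument is routed through $L^1_{loc}$ rather than through a vector-valued integral: for $L^\infty$-type BF-spaces the translation $y\mapsto f(\cdo -y)$ is not strongly continuous and need not be $\mascB$-valued measurable, so the tempting definition $\fy *f=\int \fy (y)f(\cdo -y)\,dy$ as a Bochner integral in $\mascB$ is unavailable. The density argument sidesteps this by only ever forming convolutions with genuine test functions and taking limits in the complete space $\mascB$.
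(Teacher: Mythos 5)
Your argument is correct and is essentially the paper's own route: the paper's entire proof of this proposition is the one-line remark that the result is a ``straight-forward consequence of the fact that $C_0^\infty$ is dense in $L^1_{(v)}$'', and your write-up supplies exactly that density-plus-completeness extension of $\fy \mapsto \fy *f$ from the estimate \eqref{Eq:MinkIneq}, together with the consistency check through $L^1_{loc}$. The only caveat is that the continuous embedding $\mascB \hookrightarrow L^1_{loc}(\rr d)$, which your consistency step uses, is not literally stated in Definition \ref{bfspaces1} (only $\Sigma _1\hookrightarrow \mascB$ is asserted there to be continuous), but it is a standard axiom for solid BF-spaces in Feichtinger's sense that the paper implicitly assumes.
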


\par

The result is a straight-forward consequence of the fact that $C_0^\infty$
is dense in $L^1_{(v)}$.

\par

Next we define the extended class of modulation spaces, which are of interest for us:

\par

\begin{defn}\label{bfspaces2}
Assume that $\mascB$ is a translation
invariant QBF-space on $\rr {2d}$, $\omega \in\mascP _E(\rr {2d})$,
and that $\phi \in
\Sigma _1(\rr d)\setminus 0$. Then the set $M(\omega ,\mascB )$ consists
of all $f\in \Sigma _1'(\rr d)$ such that
$$
\nm f{M(\omega ,\mascB )}
\equiv \nm {V_\phi f\, \omega }{\mascB}
$$
is finite.
\end{defn}

\par

Obviously, we have
$
M^{p,q}_{(\omega )}(\rr d)=M(\omega ,\mascB )$
if  $\mascB =L^{p,q}_1(\rr {2d})$, see e.g. (\ref{Lpqbfspaces}).
We remark, that  many properties of the classical modulation
spaces  are also true for $M(\omega ,\mascB )$.
For instance,
the definition of $M(\omega ,\mascB )$ is independent of the
choice of $\phi$ when $\mascB$ is a Banach space. This statement is
formulated in the next proposition. It can be proved by  similar arguments
as Proposition 11.3.2 in \cite{Gc2}. Hence we omit the proof.

\par

\begin{prop}\label{p1.4BF}
Let $\mascB$ be an invariant BF-space with
respect to $v_0\in \mascP _E(\rr {2d})$. Also let
$\omega ,v\in\mascP _E(\rr {2d})$ be such that $\omega$ is
$v$-moderate, $M(\omega ,\mascB )$ is the same as in Definition
\ref{bfspaces2}, and let $\phi \in M^1_{(v_0v)}(\rr d)\setminus
0$ and $f\in \Sigma _1'(\rr d)$. Then $f\in M(\omega ,\mascB )$
if and only if $V_\phi f\, \omega \in \mascB$, and
different choices of $\phi$ gives rise to equivalent norms in
$M(\omega ,\mascB )$.
\end{prop}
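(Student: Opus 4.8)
The plan is to run the standard change-of-window argument for the short-time Fourier transform, but with the solidity of $\mascB$ (Definition \ref{bfspaces1}(2)) and the convolution estimate \eqref{Eq:MinkIneq} replacing the usual mixed-Lebesgue computations. Since $\Sigma_1(\rd)\subseteq M^1_{(v_0v)}(\rd)$, it is enough to prove the apparently stronger assertion that for any two windows $\phi_1,\phi_2\in M^1_{(v_0v)}(\rd)\setminus 0$ the norms $\nm{V_{\phi_1}f\,\omega}{\mascB}$ and $\nm{V_{\phi_2}f\,\omega}{\mascB}$ are equivalent; the independence of $M(\omega,\mascB)$ of the choice of window in Definition \ref{bfspaces2} and the claimed extension to $\phi\in M^1_{(v_0v)}$ are then special cases.

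First I would fix an auxiliary window $\gamma\in\Sigma_1(\rd)$ with $\nm\gamma{L^2}=1$ and record the pointwise reproducing inequality
$$
|V_\psi f(X)|\le C\,(|V_\gamma f|*|V_\psi\gamma|)(X),\qquad X\in\rdd ,
$$
valid for every $\psi\in M^1(\rd)\setminus 0$ and every $f\in\Sigma_1'(\rd)$. This follows from the STFT inversion formula based on the single normalised window $\gamma$ together with the identity $|\scal{M_\eta T_y\gamma}{M_\xi T_x\psi}|=|V_\psi\gamma(X-Y)|$ for the time-frequency shifts, where $X=(x,\xi)$ and $Y=(y,\eta)$. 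Using $\gamma$ simultaneously as analysis and synthesis window avoids any non-vanishing inner-product assumption on the pair $(\gamma,\psi)$, and the formula is legitimate for $f\in\Sigma_1'$ because $\gamma,\psi\in M^1\subseteq L^2$.

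Next I would convert this into a norm estimate. Multiplying by $\omega(X)$ and using that $\omega$ is $v$-moderate, so that $\omega(X)\le C\omega(Y)v(X-Y)$, gives the pointwise majorisation
$$
|V_\psi f(X)|\,\omega(X)\le C\big((|V_\gamma f|\,\omega)*(|V_\psi\gamma|\,v)\big)(X).
$$
Since $\mascB$ is solid and is an invariant BF-space with respect to $v_0$, Minkowski's inequality \eqref{Eq:MinkIneq} applies with the translation weight $v_0$ and yields
$$
\nm{V_\psi f\,\omega}{\mascB}\le C\,\nm{V_\gamma f\,\omega}{\mascB}\,\nm{|V_\psi\gamma|\,v}{L^1_{(v_0)}},
\qquad \nm{|V_\psi\gamma|\,v}{L^1_{(v_0)}}=\int_{\rdd}|V_\psi\gamma|\,v\,v_0 .
$$
Taking $\psi=\phi_1,\phi_2$ gives one set of inequalities; for the reverse inequalities one reconstructs with the window $\phi_j$ in place of $\gamma$, which is admissible because $\phi_j\in M^1\subseteq L^2$ is nonzero, and one arrives at the same type of bound with $|V_\gamma\phi_j|$ in the kernel. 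The whole proposition is thereby reduced to the finiteness of the integrals $\int_{\rdd}|V_\gamma\phi_j|\,v_0v$.

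The crux of the matter — and the reason for the precise hypothesis $\phi\in M^1_{(v_0v)}$ — is exactly this finiteness, and it is here that the product weight $v_0v$ appears naturally: one power $v_0$ is consumed by Minkowski's inequality (the translation weight of $\mascB$) and one power $v$ by the moderateness of $\omega$. Using the symmetry $|V_\psi\gamma(X)|=|V_\gamma\psi(-X)|$ and the evenness of $v_0v$, the kernel integral equals $\int_{\rdd}|V_\gamma\phi_j|\,v_0v$, which is finite precisely because $\phi_j\in M^1_{(v_0v)}(\rd)$; testing this membership against the window $\gamma\in\Sigma_1(\rd)\subseteq M^1_{(v_0v)}(\rd)$ is legitimate by Proposition \ref{p1.4}. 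The main obstacle is therefore twofold: carrying out this weight bookkeeping cleanly, and giving a rigorous justification of the reproducing formula and of the interchange of integration when $f$ is merely a distribution in $\Sigma_1'(\rd)$ and the windows $\phi_j$ lie only in $M^1_{(v_0v)}(\rd)$ rather than in a Gelfand--Shilov space.
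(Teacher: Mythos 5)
Your argument is correct and is essentially the proof the paper intends: the paper omits its own proof, referring to Proposition 11.3.2 in Gr\"ochenig's book, and your reproducing inequality $|V_\psi f|\le C\,(|V_\gamma f|*|V_\psi\gamma|)$ is exactly the absolute-value form of the twisted-convolution identity \eqref{Eq:TwistConvSTFT} with $\phi_1=\phi_3=\gamma$, followed by the same moderateness/Minkowski bookkeeping with $v$ and $v_0$. The technical point you flag at the end (validity of the reproducing formula for $f\in\Sigma_1'(\rr d)$ and windows merely in $M^1_{(v_0v)}(\rr d)$) is precisely the density/limiting step carried out in the cited reference and does not affect the weight computation, which is right.
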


\par

In applications, the quasi-Banach space $\mascB$ is mostly a
mixed quasi-normed Lebesgue space, which is defined next. 
Let $E= \{ e_1,\dots,e_d \}$ be an orderd basis of $\rr d$ and let $E'=\{ e'_1,\dots,e'_d \}$ be such that 
$$
\scal {e_j} {e'_k} = 2\pi \delta_{jk},
\quad
j,k =1,\dots, d.
$$
Then $E'$ is called the dual basis of $E$. The corresponding lattice and dual lattice are 
\begin{align*}
\Lambda _E &=\sets{j_1e_1+\cdots +j_de_d}{(j_1,\dots,j_d)\in \zz d},
\\[1ex]
\intertext{and}
\Lambda'_E &= \Lambda_{E'}=\sets{\iota _1e'_1+\cdots +\iota _de'_d}
{(\iota _1,\dots ,\iota _d) \in \zz d},
\end{align*}

\par

There is a matrix $T_E$ such that
$e_1,\dots ,e_d$ and $e_1',\dots ,e_d'$ are the images of
the standard basis under $T_E$ and
$T_{E'}= 2\pi(T^{-1}_E)^t$, respectively. We also let $\kappa (E)$ be
the parallelepiped spanned by the basis $E$.

\par

We define for each $\mabfq =(q_1,\dots ,q_d)\in (0,\infty ]^d$
$$
\max \mabfq =\max (q_1,\dots ,q_d)
\quad \text{and}\quad
\min \mabfq =\min (q_1,\dots ,q_d).
$$

\par

\begin{defn}\label{Def:MixedLebSpaces}
Let $E = \{ e_1,\dots ,e_d\}$ be an orderd basis of $\rr d$, $\omega$ be a weight on
$\rr d$,
$\mabfp =(p_1,\dots ,p_d)\in (0,\infty ]^{d}$ and $r=\min (1,\mabfp )$.
If  $f\in L^r_{loc}(\rr d)$, then
$$
\nm f{L^{\mabfp }_{E,(\omega )}}\equiv
\nm {g_{d-1}}{L^{p_{d}}(\mathbf R)},
$$
where  $g_k(z_k)$, $z_k\in \rr {d-k}$,
$k=0,\dots ,d-1$, are inductively defined as
\begin{align*}
g_0(x_1,\dots ,x_{d}) &\equiv |f(x_1e_1+\cdots +x_{d}e_d)
\omega (x_1e_1+\cdots +x_{d}e_d)|,
\\[1ex]
\intertext{and}
g_k(z_k) &\equiv
\nm {g_{k-1}(\cdo ,z_k)}{L^{p_k}(\mathbf R)},
\quad k=1,\dots ,d-1.
\end{align*}
The space $L^{\mabfp }_{E,(\omega )}(\rr d)$ consists
of all $f\in L^r_{loc}(\rr d)$ such that
$\nm f{L^{\mabfp}_{E,(\omega )}}$ is finite, and is called
\emph{$E$-split Lebesgue space (with respect to $\mabfp$ and $\omega$}).
\end{defn}

\par

Let $E$, $\mabfp$ and $\omega$ be the same as in Definition \ref{Def:MixedLebSpaces}.
Then the discrete version $\ell ^{\mabfp}_{E,(\omega )}(\Lambda _E)$
of $L^{\mabfp}_{E,(\omega )}(\rr d)$ is the set of all
sequences $a=\{ a(j)\} _{j\in \Lambda _E}$ such that the quasi-norm
$$
\nm a{\ell ^{\mabfp}_{E,(\omega )}}
\equiv
\nm {f_a}{L^{\mabfp}_{E,(\omega )}},\qquad f_a=\sum _{j\in \Lambda _E} a(j)\chi _j,
$$
is finite. Here $\chi _j$ is the characteristic function of $j+\kappa (E)$.
We also set $L^{\mabfp}_{E} = L^{\mabfp}_{E,(\omega )}$
and $\ell ^{\mabfp}_{E} = \ell ^{\mabfp}_{E,(\omega )}$
when $\omega =1$.

\par

\begin{defn}\label{Def:MixedPhaseShiftLebSpaces}
Let $E$ be an ordered basis of the phase space $\rr {2d}$. Then
$E$ is called \emph{phase split}
if there is a subset $E_0\subseteq E$ such that the span of $E_0$ equals
$\sets {(x,0)\in \rr {2d}}{x\in \rr d}$, and the span of $E\setminus E_0$
equals $\sets {(0,\xi )\in \rr {2d}}{\xi \in \rr d}$.
\end{defn}

\par

%

\par

\subsection{Pilipovi{\'c} flat spaces, modulation spaces outside
time-frequency analysis and the Bargmann transform}

\par
Besides the characterization by means of the short-time Fourier transform,
see Proposition \ref{stftGelfand2}, 
Gelfand-Shilov spaces can be characterized via Hermite function
expansion, too. Here the Hermite function of order $\alpha \in \nn d$ is given by
$$
h_\alpha (x) = \pi ^{-\frac d4}(-1)^{|\alpha |}
(2^{|\alpha |}\alpha !)^{-\frac 12}e^{\frac {|x|^2}2}
(\partial ^\alpha e^{-|x|^2}).
$$
We also can write $h_{\alpha}$ via
$$
h_{\alpha}(x)=   ( (2\pi )^{\frac d2} \alpha ! )^{-1}
e^{-\frac {|x|^2}2}p_{\alpha}(x),
$$
for some polynomial $p_\alpha$ on $\rr d$. $p_\alpha$ are
called the Hermite polynomial of order $\alpha$. It is well-known
that $\{ h_\alpha \} _{\alpha \in \nn d}$ provides an orthonormal basis
for $L^2(\rr d)$.

\par

We now can characterize the Gelfand-Shilov spaces $\maclS _s(\rr d)$
($\Sigma _s(\rr d)$) with $s\ge \frac 12$ ($s>\frac 12$) as follows:
$f\in \maclS _s(\rr d)$ ($f\in \Sigma _s(\rr d)$), if and only
if the coefficients $c_\alpha (a)$ in its Hermite series expansion
\begin{equation}\label{Eq:HermiteExp}
f = \sum _{\alpha \in \nn d}c_\alpha (f)h_\alpha ,\qquad
c_\alpha (f) = (f,h_\alpha ) 
\end{equation}
fulfilles 
$$
|c_\alpha (f)| \lesssim e^{-r|\alpha |^{\frac 1{2s}}},
$$
for some $r>0$ (for every $r>0$). 
Various kinds of Fourier-invariant functions and
distribution spaces can be obtained by applying suitable topologies
on formal power series expansions, cf. e.{\,}g. \cite{FeGaTo1,To22}.
To mention one of them, which is of peculiar interest: The \emph{Pilipovi{\'c} flat
space}, $\maclH _\flat
(\rr d)$ and its dual $\maclH _\flat '(\rr d)$,
are defined by all formal expansions \eqref{Eq:HermiteExp} such that
$$
|c_\alpha (f)| \lesssim r^{|\alpha |}\alpha !^{-\frac 12}
$$
for some $r>0$, respectively
$$
|c_\alpha (f)| \lesssim r^{|\alpha |}\alpha !^{\frac 12}
$$
for every $r>0$. For $f\in \maclH
_\flat '(\rr d)$ and $\phi \in \maclH _\flat (\rr d)$,
we define
$$
(f,\phi )_{L^2(\rr d)} \equiv \sum _{\alpha \in \nn d}
c_\alpha (f)\overline{c_\alpha (\phi)}.
$$
If $\phi, f \in L^2(\rr d)$, the pairing $(f,\phi )_{L^2(\rr d)}$ agree's with the
$L^2(\rr d)$ scalar product of those two functions.

\par

We remark that $\maclH _\flat '(\rr d)$ is larger than any
Fourier-invariant Gelfand-Shilov distribution space,
and $\maclH _\flat (\rr d)$ is
smaller than any Fourier-invariant Gelfand-Shilov space.
We already know, that any
$f\in \maclS _s(\rr d)$ ($f\in \Sigma _s(\rr d)$) with $s\ge \frac 12$ ($s> \frac 12$) can be
expressed in a unique way by an expansion \eqref{Eq:HermiteExp}
with convergence in $\maclS _s'(\rr d)$ ($\Sigma _s'(\rr d)$).
Similarly, we have $f\in \maclS' _s(\rr d)$ ($f\in \Sigma' _s(\rr d)$),
if and only if
$$
|c_\alpha (f)| \lesssim e^{r|\alpha |^{\frac 1{2s}}},
$$
for every $r>0$ (for some $r>0$).

\par

One reason, why the Pilipovi{\'c} flat space $\maclH _\flat
(\rr d)$ and its dual $\maclH _\flat '(\rr d)$ are of particular interest, are is their images
under the Bargmann transform. The kernel of the Bargmann transform
is given by
$$
\mathfrak A_d(z,y)=\pi ^{-\frac d4} \exp \Big ( -\frac 12(\scal
zz+|y|^2)+2^{\frac 12}\scal zy\Big ),
$$
which is analytic in $z$. Seen as a function of $y$, $\mathfrak A_d$ belongs to $\maclH _\flat
(\rr d)$. 
We define the Bargmann transform $(\mathfrak V_df)(z)$ of a function $f\in \maclH _\flat
'(\rr d)$ by
$$
(\mathfrak V_df)(z) =\scal f{\mathfrak A_d(z,\cdo )},
$$
where $\scal f\phi = (f,\overline \phi )_{L^2(\rr d)}$.
Due to \cite{To22} we know that $\mathfrak V_d$ is bijective between
$\maclH _\flat '(\rr d)$ and $A(\cc d)$, the set of all entire functions
on $\cc d$, and restricts to a bijective map from $\maclH _\flat (\rr d)$
and
$$
\sets {F\in A(\cc d)}{|F(z)|\lesssim e^{R|z|},\ \text{for some}\ R>0} .
$$

\par

Later on we will need,  that the Bargmann
and the short-time Fourier transform are linked  by the formula
\begin{equation}\label{Eq:BargmannSTFTlink}
\begin{aligned}
(\mathfrak{V} _df)(x+\im \xi ) 
&=  (2\pi )^{\frac d2}e^{\frac 12(|x|^2+|\xi|^2)}e^{-i\scal x\xi}
(V_\phi )f(2^{\frac 12}x,-2^{\frac 12}\xi ),
\\[1ex]
\phi (x) &= \pi ^{-\frac d4}e^{-\frac 12|x|^2},\qquad x\in \rr d,
\end{aligned}
\end{equation}
This can be shown by straight-forward computations. By means of
the operator
\begin{equation}\label{EqUBDef}
(U_{\mathfrak V}F)(x,\xi ) =
(2\pi )^{\frac d2}e^{\frac 12(|x|^2+|\xi|^2)}e^{-i\scal x\xi}
F(2^{\frac 12}x,-2^{\frac 12}\xi ),
\end{equation}
when $F$ is a function or a suitable element of $F \in \maclH' _\flat
(\rr d)$
we can write the Bargmann transform as
$$
(\mathfrak{V} _df)(x+\im \xi )
= (U_{\mathfrak V}(V_\phi f))(x,\xi ).
$$

\par

\begin{defn}\label{Lemma:TheSpaces}
Let $\phi$ be as in \eqref{Eq:BargmannSTFTlink},
$\omega$ be a weight on $\rr {2d}$, $\mascB $ be an invariant
QBF-space with respect to $v\in \mascP _E(\rr {2d})$
on $\rr {2d}\simeq \cc d$ of order $r\in (0,1]$.
\begin{enumerate}
\item $B(\omega ,\mascB )$ consists of all
$F\in L^r_{loc}(\rr {2d})= L^r_{loc}(\cc {d})$ such that
$$
\nm F{B(\omega ,\mascB )}\equiv
\nm {(U_{\mathfrak V}^{-1}F)\omega }{\mascB }<\infty .
$$
Here $U_{\mathfrak V}$ is given by \eqref{EqUBDef};

\vrum

\item $A(\omega ,\mascB )$ consists of all $F\in A(\cc
d)\cap B(\omega ,\mascB )$ with topology inherited
from $B(\omega ,\mascB )$;

\vrum

\item $M(\omega ,\mascB )$ consists of all $f\in
\maclH _\flat' (\rr d)$ such that
$$
\nm f{M(\omega ,\mascB)} \equiv
\nm {V_\phi f \cdot \omega}{\mascB}
$$
is finite.
\end{enumerate}
\end{defn}

\par

We observe the small restrictions on $\omega$ compared to
what is the main stream, e.{\,}g. that $\omega$ should belong
to $\mascP _E(\rr {2d})$ or be moderated by functions which are
bounded by polynomials. We still call the space $M(\omega ,\mascB )$
as the modulation space with respect to $\omega$ and $\mascB$.
In contrast to earlier situations, it seems that $M(\omega ,\mascB )$
is not invariant under the choice of $\phi$ when $\omega$ fails to
belong to $\mascP _E$. For that reason we always assume that the
weight function is given by \eqref{Eq:BargmannSTFTlink} for such
$\omega$.

\par

We have the following.

\par

\begin{prop}\label{Prop:MadAnalIdent}
Let $\phi$ be as in \eqref{Eq:BargmannSTFTlink}, $\omega$
be a weight on $\rr {2d}$, and let $\mascB $ be an invariant
QBF-space with respect to $v\in \mascP _E(\rr {2d})$. Then
the following is true:
\begin{enumerate}
\item the map $\mathfrak V _d$ is an isometric bijection from
$M(\omega ,\mascB )$ to $A(\omega ,\mascB )$;

\vrum

\item if in addition $\mascB$ is a mixed quasi-norm space
of Lebesgue types, then $M(\omega ,\mascB )$ and $A(\omega
,\mascB )$
are quasi-Banach spaces, which are Banach spaces in the case
$\mascB$ is a Banach space.
\end{enumerate}
\end{prop}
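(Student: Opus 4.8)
The plan is to reduce the whole statement to the single identity
$$
U_{\mathfrak V}^{-1}(\mathfrak V _df)=V_\phi f,
$$
which is nothing but \eqref{Eq:BargmannSTFTlink} rewritten through the operator $U_{\mathfrak V}$ of \eqref{EqUBDef}. First I would settle the isometry in (1). Fixing $f\in \maclH _\flat '(\rr d)$ and setting $F=\mathfrak V _df$, the identity above gives $U_{\mathfrak V}^{-1}F=V_\phi f$, whence by Definition \ref{Lemma:TheSpaces},
$$
\nm F{B(\omega ,\mascB )}=\nm {(U_{\mathfrak V}^{-1}F)\,\omega }{\mascB}=\nm {V_\phi f\cdot \omega }{\mascB}=\nm f{M(\omega ,\mascB )}.
$$
Thus $\mathfrak V _d$ preserves the (quasi-)norm, and only the bijection onto $A(\omega ,\mascB )$ remains.

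For the bijection I would invoke the bijectivity of $\mathfrak V _d$ from $\maclH _\flat '(\rr d)$ onto $A(\cc d)$ recalled above. If $f\in M(\omega ,\mascB )$, then $F=\mathfrak V _df$ is entire and, by the isometry, $\nm F{B(\omega ,\mascB )}<\infty$, so $F\in A(\cc d)\cap B(\omega ,\mascB )=A(\omega ,\mascB )$. Conversely, any $F\in A(\omega ,\mascB )$ is entire, hence equals $\mathfrak V _df$ for a unique $f\in \maclH _\flat '(\rr d)$, and the isometry forces $\nm f{M(\omega ,\mascB )}=\nm F{B(\omega ,\mascB )}<\infty$, i.e.\ $f\in M(\omega ,\mascB )$. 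This yields (1).

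For (2) the first step is to observe that $B(\omega ,\mascB )$ is complete. The map $F\mapsto (U_{\mathfrak V}^{-1}F)\,\omega$ is, by the very definition of the norm, an isometric bijection from $B(\omega ,\mascB )$ onto $\mascB$: it is onto because $g\mapsto U_{\mathfrak V}(g/\omega )$ is a two-sided inverse, using $\omega ,1/\omega \in L^\infty _{loc}$ (so that $g/\omega \in L^r_{loc}$) together with the fact that $U_{\mathfrak V}$, being multiplication by a nowhere-vanishing smooth factor followed by a dilation, preserves $L^r_{loc}$. Hence $B(\omega ,\mascB )$ inherits completeness from the (quasi-)Banach space $\mascB$, and it is a Banach space exactly when $\mascB$ is. By the isometry of (1), it then suffices to show that $A(\omega ,\mascB )$ is a closed subspace of $B(\omega ,\mascB )$.

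The main obstacle is precisely this closedness, that is, that a $B(\omega ,\mascB )$-limit of entire functions is again entire. Here the hypothesis that $\mascB$ is a mixed quasi-norm Lebesgue space enters: by solidity of $\mascB$, the explicit form of the mixed norm, and the local boundedness of $\omega$ and $1/\omega$, convergence in $B(\omega ,\mascB )$ forces convergence in $L^r$ on every compact subset of $\cc d$. Combining this with the subharmonicity of $|F|^r$ for entire $F$---which upgrades $L^r$-convergence on compacta to local uniform convergence---a Cauchy sequence in $A(\omega ,\mascB )$ converges locally uniformly to an entire function that must agree with its $B(\omega ,\mascB )$-limit. The limit therefore lies in $A(\cc d)\cap B(\omega ,\mascB )=A(\omega ,\mascB )$, which gives the closedness. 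Consequently $A(\omega ,\mascB )$ is complete, and via the isometric bijection $\mathfrak V _d$ so is $M(\omega ,\mascB )$, with the Banach case corresponding to $\mascB$ being Banach. This would complete the proof of (2).
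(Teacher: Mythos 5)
Your part (1) follows the paper's own argument exactly: the isometry is read off from \eqref{Eq:BargmannSTFTlink} and \eqref{EqUBDef} via Definition \ref{Lemma:TheSpaces}, and surjectivity onto $A(\omega ,\mascB )$ comes from the bijectivity of $\mathfrak V _d$ between $\maclH _\flat '(\rr d)$ and $A(\cc d)$. Where you genuinely diverge is part (2): the paper merely states that the completeness of $A(\omega ,\mascB )$, and thereby of $M(\omega ,\mascB )$, follows from \cite{To22} and leaves the details to the reader, whereas you supply a self-contained proof. Your route --- first identifying $B(\omega ,\mascB )$ isometrically with $\mascB$ so that it inherits completeness, then showing $A(\omega ,\mascB )$ is closed in $B(\omega ,\mascB )$ because the mixed quasi-norm dominates $L^r$-norms on compacta and plurisubharmonicity of $|F|^r$ for entire $F$ upgrades local $L^r$-convergence to locally uniform convergence --- is the standard Bergman/Fock-space closedness argument and is sound; it also makes visible exactly where the ``mixed quasi-norm Lebesgue space'' hypothesis enters, namely in guaranteeing that $\mascB$-convergence controls local $L^r$-convergence, which is not automatic for a general invariant QBF-space. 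Two small points deserve a sentence each in a polished write-up: the locally uniform limit and the $B(\omega ,\mascB )$-limit coincide a.e.\ because both are local $L^r$-limits, and $U_{\mathfrak V}^{\pm 1}$ preserves $L^r_{loc}$ since the Gaussian factor in \eqref{EqUBDef} is locally bounded above and below. What your approach buys is a proof that does not outsource the key completeness step; what the paper's citation buys is brevity.
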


\par

\begin{proof}
From \eqref{Eq:BargmannSTFTlink}, \eqref{EqUBDef}
and Definition \ref{Lemma:TheSpaces} it follows that
$\mathfrak V _d$ is an isometric injection from
$M(\omega ,\mascB )$ to $A(\omega ,\mascB )$. Since
any element in $A(\cc d)$, and thereby any element
in $A(\omega ,\mascB)$ is a Bargmann transform
of an element in $\maclH' _\flat (\rr d)$, it follows
that the image of $M(\omega ,\mascB )$ under
$\mathfrak V_d$ contains $A(\omega ,\mascB )$. This gives
the stated bijectivity in (1).

\par

The completeness of $A(\omega ,\mascB )$, and thereby of
$M(\omega ,\mascB )$ follows from \cite{To22}. The details
are left for the reader.
\end{proof}

\par

\section{Compactness properties for modulation
spaces}\label{sec2}

\par

This section is devoted to the questions under which sufficient and necessary
conditions the inclusion map 
\begin{align*}
\iota : M(\omega _1, \mathscr{B}) \rightarrow M(\omega _2, \mathscr{B})
\end{align*}
is continuous or even compact for suitable invariant QBF-spaces $\mathscr{B}$.

\par

As ingredients for the proof of our main results we need to deduce  some
properties for moderate weight functions. In what follows let $L^{\infty}_{0,(\omega )}(\rd)$
be the set of all $f \in L^{\infty}_{(\omega )}(\rd)$ with the property
\begin{align*}
  \lim_{R \rightarrow \infty} \left( \essup_{|x| \geq R} |f(x)\omega (x)| \right) =0,
\end{align*}
when $\omega$ is a weight on $\rr d$. We also set $L^{\infty}_{0}=L^{\infty}_{0,(\omega )}$
when $\omega =1$. If $\Lambda$ is a lattice, then the discrete Lebesgue spaces
$\ell ^\infty _0(\Lambda )$ and $\ell ^\infty _{0,(\omega )}(\Lambda )$ are defined analogously.

\par

\begin{lemma}\label{compares1}
Let $E$ be an ordered basis of $\rr d$ and let $\omega \in \mascP _E(\rr d)$.
Then the following is true:
\begin{enumerate}
\item $\mascP _E(\rr d)$ is a convex cone which is closed under
multiplication, division and under compositions with power functions;

\vrum

\item $\mascP _E(\rr {2d})\cap L^{\mabfp}_{E,(\omega )}(\rr {d})$
increases with $\mabfp \in (0,\infty ]^d$, and
\begin{equation}\label{compares2}
\mascP _E(\rr {d})\cap L^{\mabfp}_{E,(\omega )}(\rr {d})
\subseteq
\mascP _E(\rr {d})\cap L^\infty _{0,(\omega )}(\rr {d}),
\qquad \mabfp \in (0,\infty )^d.
\end{equation}
\end{enumerate}
\end{lemma}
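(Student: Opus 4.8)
The plan is to treat the two assertions separately, relying throughout on the moderateness inequality \eqref{e1.1} and on the structure of the $E$-split Lebesgue norm from Definition \ref{Def:MixedLebSpaces}.

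For part (1), I would verify each closure property directly from the definition of $\mascP _E$. If $\omega _1$ is $v_1$-moderate and $\omega _2$ is $v_2$-moderate, then $\omega _1\omega _2$ satisfies \eqref{e1.1} with weight $v_1v_2$, since
$$
\omega _1(x_1+x_2)\omega _2(x_1+x_2)\lesssim \omega _1(x_1)v_1(x_2)\omega _2(x_1)v_2(x_2).
$$
For division one applies the same estimate to $1/\omega _2$, observing that $1/\omega _2$ is $v_2$-moderate whenever $\omega _2$ is (replace $x_1$ by $x_1+x_2$ and $x_2$ by $-x_2$ in \eqref{e1.1}, using that $v_2$ may be taken even). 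Convexity of the cone follows because a finite sum $\sum c_j\omega _j$ with $c_j\ge 0$ is moderate with respect to $\max _j v_j$. For composition with a power $t>0$, raising \eqref{e1.1} to the $t$-th power shows $\omega ^t$ is $v^t$-moderate; the case $t<0$ is handled via the division already established. None of these steps presents a genuine obstacle; they are bookkeeping with \eqref{e1.1}.

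For part (2), the monotonicity statement follows from the general fact that $L^{\mabfp}_{E,(\omega )}\subseteq L^{\mabfq}_{E,(\omega )}$ whenever $\mabfp\le\mabfq$ componentwise, \emph{provided} the function also lies in $\mascP _E$, so that the relevant local integrability upgrades to global summability. I would prove the key inclusion \eqref{compares2} as follows. Suppose $\omega\in\mascP _E(\rr d)$ also lies in $L^{\mabfp}_{E,(\omega )}$ for some $\mabfp\in(0,\infty)^d$. By the moderateness of $\omega$ there is a submultiplicative $v$ with $\omega(x+y)\lesssim\omega(x)v(y)$, hence $|\omega(x)\omega(x)|$ controls $|\omega(y)\omega(y)|$ on unit cubes up to a fixed constant; this gives a lower regularity that converts finiteness of the $L^{\mabfp}_{E,(\omega )}$-norm into decay. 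Concretely, writing the iterated norm from Definition \ref{Def:MixedLebSpaces}, finiteness of $\nm{\omega}{L^{\mabfp}_{E,(\omega )}}$ forces the integrand $|\omega(x)|^2$ (i.e. $g_0$ with $f=\omega$) to be integrable in each coordinate direction with a finite mixed quasi-norm, and moderateness prevents $\omega^2$ from oscillating too wildly between neighbouring cubes. The decay $\esssup _{|x|\ge R}|\omega(x)^2|\to 0$ then follows by a standard argument: if $\omega(x_k)^2\ge\delta>0$ along a sequence $|x_k|\to\infty$, then by moderateness $\omega^2\gtrsim\delta$ on a unit neighbourhood of each $x_k$, and infinitely many disjoint such neighbourhoods would make the $L^{\mabfp}_{E,(\omega )}$-quasi-norm infinite, a contradiction.

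The main obstacle I anticipate is handling the mixed, direction-dependent and possibly sub-one exponents $\mabfp\in(0,\infty)^d$ uniformly: the quasi-norm is built by iterating one-dimensional $L^{p_k}$ norms, so the contradiction argument must be arranged so that the disjoint neighbourhoods of the $x_k$ contribute a divergent sum \emph{in the innermost failing coordinate} after the outer norms are taken, which requires choosing the sequence $x_k$ separated in an appropriate lattice direction. The submultiplicativity and evenness of $v$, guaranteed by \eqref{Eq:StrongSubMult}, are exactly what make the local comparison of $\omega$ across neighbouring cubes uniform, and reducing to unit cubes of the lattice $\Lambda _E$ lets me replace the continuous iterated integral by the comparable discrete $\ell^{\mabfp}_{E,(\omega )}$-sum, where the divergence is transparent.
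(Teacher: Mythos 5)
Your proposal is correct and follows essentially the same route as the paper: part (1) by direct bookkeeping with \eqref{e1.1}, and part (2) by exploiting that a moderate weight is essentially constant on translates of $\kappa (E)$, so the continuous mixed norm is equivalent to the discrete $\ell ^{\mabfp}_{E}$-norm of the lattice samples, from which both the monotonicity in $\mabfp$ and the inclusion into $L^\infty _{0,(\omega )}$ follow (the paper cites $\ell ^{\mabfp}_E\subseteq \ell ^\infty _0$ directly where you run an equivalent contradiction argument). The only cosmetic slip is that you argue with $\omega$ itself as the element of $L^{\mabfp}_{E,(\omega )}$ rather than a general $\vartheta \in \mascP _E\cap L^{\mabfp}_{E,(\omega )}$; since the argument only uses that $\vartheta \cdot \omega$ is again moderate (part (1)), this is exactly the paper's reduction to $\omega =1$ and costs nothing.
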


\par

Similar properties has already been shown in \cite[Lemma 2.1]{BoTo} for the
smaller weight space $\mascP$.

\begin{proof} Claim (1) can easily be verified by means of the definition of moderate weights. 

\par

It remains to verify (2). Let $\kappa (E)$ be the (closed) parallelepiped spanned by $E$,
and let $\vartheta \in \mascP _E(\rr d)$. By using the map $\vartheta \mapsto \vartheta
\cdot \omega$, we reduce ourself to the case when $\omega =1$.

The moderateness of $\vartheta \in
\mascP _E(\rr {2d})$ implies that
\begin{equation}\label{ineq}
\vartheta (x_1)\asymp  \vartheta (x_1+x_2)
\quad \text{when}\quad
x_2\in \kappa (E).
\end{equation}
Hence, if $\chi _j$ is the characteristic function of $j+\kappa (E)$, and
$$
\vartheta _0 (x) = \sum _{j\in \Lambda _E}\vartheta (j)\chi _j(x),
$$
then $\vartheta \asymp \vartheta _0$, giving that
$$
\nm \vartheta{L^{\mabfp}_E} \asymp \nm {\vartheta _0}{L^{\mabfp}_E}
\asymp \nm \vartheta {\ell ^{\mabfp}_E}.
$$
The assertion now follows from the fact that $\ell ^{\mabfp}_E$ increases with
$\mabfp$ and that if in addition $\mabfp \in (0,\infty )^d$, then
$\ell ^{\mabfp}_E\subseteq \ell ^\infty _0$.
\end{proof}

\par

We also have the following result, which is an immediate consequence
of \cite[Theorem 2.5]{To26}.

\par

\begin{prop}\label{Prop:LargestModSpace}
Let $v,v_0\in \mascP _E(\rr {2d})$ be
submultiplicative, $\omega \in \mascP _E(\rr {2d})$ be $v$-moderate,
and let $\mascB$ be an invariant BF-space with respect to $v_0$. Then
$M(\omega ,\mascB)$ is a Banach space, and
\begin{equation}\label{Eq:LargestModSpace}
M(\omega ,\mascB )\hookrightarrow M^\infty
_{(1/(v_0v))}(\rr d) .
\end{equation}
\end{prop}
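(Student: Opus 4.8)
The plan is to establish the two assertions separately, deriving the continuous embedding first and then bootstrapping it to obtain completeness.

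For the embedding $M(\omega ,\mascB )\hookrightarrow M^\infty _{(1/(v_0v))}(\rd)$, the natural starting point is the pointwise reproducing inequality for the short-time Fourier transform. Normalising $\phi$ so that $\nm \phi{L^2}=1$, one has, up to a fixed normalising constant,
$$
|V_\phi f(X)|\le (|V_\phi f|*|V_\phi \phi |)(X),\qquad X\in \rdd ,
$$
valid for every $f\in \Sigma _1'(\rd)$, since $V_\phi$ intertwines with a twisted convolution against $V_\phi \phi$ and the twisted convolution is dominated in modulus by the ordinary convolution of moduli. I would then insert the weight by writing $|V_\phi f(Y)|=(|V_\phi f|\,\omega )(Y)\cdot \omega (Y)^{-1}$ inside the convolution and invoke the $v$-moderateness of $\omega$, i.e.\ $\omega (X)\lesssim \omega (Y)v(X-Y)$, to obtain
$$
|V_\phi f(X)|\le \frac{C}{\omega (X)}\,(G*\Psi )(X),\qquad G=|V_\phi f|\,\omega ,\quad \Psi =v\cdot |V_\phi \phi |.
$$
Here $G\in \mascB$ by solidity, with $\nm G{\mascB }=\nm f{M(\omega ,\mascB )}$, and $\Psi$ decays super-exponentially, because $\phi \in \Sigma _1(\rd)$ forces $V_\phi \phi$ to have Gaussian decay while the moderate weight $v$ grows at most exponentially.

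The crux is then the pointwise bound
$$
|(G*\Psi )(X)|\le C\,v_0(X)\,\nm G{\mascB },\qquad X\in \rdd .
$$
To prove it I would read $(G*\Psi )(X)=\int G(Y)\Psi (X-Y)\,dY$ as the pairing of $G\in \mascB$ against the translate $Y\mapsto \Psi (X-Y)$ and estimate it through the associate (K{\"o}the dual) space $\mascB ^\times $, which is again a solid, translation invariant space with weight $v_0$ (using $v_0(-\cdo )=v_0$). Solidity together with translation invariance of $\mascB ^\times $ yields $\nm {\Psi (X-\cdo )}{\mascB ^\times }\le C\,v_0(X)\,\nm \Psi {\mascB ^\times }$, so the bound follows once $\Psi \in \mascB ^\times $. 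The main obstacle is precisely verifying that $\Psi \in \mascB ^\times $: this is exactly where the continuous inclusion $\Sigma _1(\rd)\subseteq \mascB$ and the super-exponential decay of $\Psi$ are indispensable, since together they guarantee that $\Psi$ pairs boundedly with every element of $\mascB$. Combining the two displayed estimates gives $|V_\phi f(X)|\le C\,v_0(X)\,\omega (X)^{-1}\nm f{M(\omega ,\mascB )}$, and since $\omega$ is $v$-moderate one has $\omega (X)^{-1}\lesssim v(X)$, whence $|V_\phi f(X)|\lesssim v_0(X)v(X)\,\nm f{M(\omega ,\mascB )}$, which is exactly the asserted continuous embedding into $M^\infty _{(1/(v_0v))}(\rd)$.

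For the Banach space assertion I would argue that $f\mapsto V_\phi f\,\omega$ is an isometry of $M(\omega ,\mascB )$ into the Banach space $\mascB$ with closed range. Given a Cauchy sequence $\{ f_n\}$, the images $V_\phi f_n\,\omega$ converge to some $G\in \mascB$; by the embedding just proved and Proposition \ref{p1.4}, $M(\omega ,\mascB )\hookrightarrow M^\infty _{(1/(v_0v))}(\rd)\hookrightarrow \Sigma _1'(\rd)$, so $\{ f_n\}$ converges in $\Sigma _1'(\rd)$ to some $f$. Continuity of the short-time Fourier transform on $\Sigma _1'$ (cf.\ Remark \ref{Rem:STFT}) then gives $V_\phi f_n\to V_\phi f$, and matching this with the $\mascB$-limit, through the local embedding $\mascB \hookrightarrow L^1_{loc}(\rdd)$, identifies $G=V_\phi f\,\omega$. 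Hence $f\in M(\omega ,\mascB )$ and $f_n\to f$ in $M(\omega ,\mascB )$, which establishes completeness.
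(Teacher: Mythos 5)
Your overall architecture is sound and genuinely different from the paper's: you prove the embedding \eqref{Eq:LargestModSpace} directly via the reproducing inequality $|V_\phi f|\lesssim |V_\phi f|*|V_\phi \phi |$ followed by a duality estimate, and only then deduce completeness by identifying the $\mascB$-limit of $V_\phi f_n\, \omega$ with $V_\phi f\, \omega$ inside $L^1_{loc}$. The paper argues in the opposite order: it first proves completeness by passing the $\mascB _{(\omega )}$-limit $F$ of $V_\phi f_j$ through the projection $P_\phi F=\nm \phi {L^2}^{-2}F{\, \widehat *\,}(V_\phi \phi )$ of Lemma \ref{Lemma:Proj}, which lands back in the range of $V_\phi$, and then obtains \eqref{Eq:LargestModSpace} by citing \cite[Theorem 2.5]{To26} together with the Banach property. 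Both completeness arguments are legitimate; yours additionally uses, without stating it, that norm convergence in $\mascB$ implies convergence in $L^1_{loc}$ in order to identify the two limits.

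There is, however, a genuine gap at the step you yourself single out as the crux, namely that $\Psi =v\, |V_\phi \phi |$ lies in the K{\"o}the dual $\mascB ^\times$ with $\nm {\Psi (X-\cdo )}{\mascB ^\times}\lesssim v_0(X)$. You justify this by the continuous inclusion $\Sigma _1(\rdd )\subseteq \mascB$ together with the decay of $\Psi$, but that inclusion points in the wrong direction: it tells you that rapidly decaying functions belong to $\mascB$ (equivalently, that $\mascB '$ embeds into $\Sigma _1'$), whereas membership of $\Psi$ in $\mascB ^\times$ requires an upper bound on the elements of $\mascB$, i.e. an embedding of $\mascB$ into something like $W(L^1,\ell ^\infty _{(1/v_0)})$. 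That is the statement one actually has to prove: from $\mascB \subseteq L^1_{loc}(\rdd )$ a closed-graph argument gives $\int _{Q}|G|\, dY\le C\nm G{\mascB}$ for a fixed cube $Q$, translation invariance then yields $\int _{Q+X}|G|\, dY\lesssim v_0(X)\nm G{\mascB}$, and only after that does the super-exponential decay of $\Psi$ against the at most exponential growth of $v_0$ allow you to sum over cubes and conclude $(G*\Psi )(X)\lesssim v_0(X)\nm G{\mascB}$. This is precisely the content of Feichtinger's minimality/maximality principle, i.e. of \cite[Theorem 2.5]{To26}, which the paper invokes here and which also underlies its proof of Lemma \ref{Lemma:MBInfty}. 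With that step supplied your proof goes through; as written, the central estimate is unsupported.
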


\par

\begin{rem}\label{Rem:LargestModSpace}
If $\mascB =L^{\mabfp}_E(\rr {2d})$  for some phase split basis $E$
of $\rr {2d}$, $\mabfp \in (0,\infty ]^{2d}$ and $\omega \in \mascP _E(\rr {2d})$,
then $M(\omega ,\mascB )$ is a quasi-Banach space. Moreover
$M(\omega , L^{\mabfp}_E(\rr {2d}))$ is increasing with $\mabfp$. In particular,
\eqref{Eq:LargestModSpace}  is improved into
$$
M(\omega , L^{\mabfp}_E(\rr {2d}))\hookrightarrow M^\infty _{(\omega )}(\rr {2d}).
$$
We refer to  \cite{To20} for the proof.
\end{rem}

\par

For the proof  the twisted convolution $\widehat *$ of two functions $F,G\in L^1(\rr {2d})$
defined by
$$
(F{\, \widehat *\,}G)(x,\xi ) = (2\pi )^{-\frac d2}
\iint _{\rr {2d}}F(x-y,\xi -\eta )G(y,\eta )
e^{-i\scal {x-y}\eta}\, dyd\eta ,
$$
is needed. The twisted convolution is continuous as a map between several
function spaces, see e.{\,}g. \cite{Gc1} or Lemma 3 in \cite{CoJoTo1}. For
instance the map $(F,G)\mapsto F{\, \widehat *\,}G$
is continuous from $L^1(\rr {2d})\times L^1(\rr {2d})$ to
$L^1(\rr {2d})$, and can be restricted to a
continuous map from 
$\Sigma _1 (\rr {2d})\times \Sigma _1 (\rr {2d})$ to
$\Sigma _1(\rr {2d})$. The latter map can be continuously extended 
to a continuous map from
$\Sigma _1 '(\rr {2d})\times \Sigma _1 (\rr {2d})$ to
$\Sigma _1'(\rr {2d})$.

\par

On account of  the Fourier's inversion formula we obtain for all $f\in
\Sigma _1'(\rr d)$ and $\phi _1,\phi _2,\phi _3
\in \Sigma _1(\rr d)$:
\begin{equation}\label{Eq:TwistConvSTFT}
(\phi _3,\phi _1)_{L^2}\cdot V_{\phi _2}f = (V_{\phi _1}f){\, \widehat *\,}
(V_{\phi _2}\phi _3).
\end{equation}
Since $(\phi _2,\phi _3)\mapsto
V_{\phi _2}\phi _3$ is continuous from $\Sigma _1 (\rr {d})
\times \Sigma _1 (\rr {d})$ to $\Sigma _1(\rr {2d})$ we get for $\phi _1=\phi _2
=\phi _3=\phi \in \Sigma _1(\rr d)\setminus 0$ the continuity of
the operator $P_\phi$, defined by
\begin{equation}\label{Eq:ProjOp}
P_\phi F \equiv \nm {\phi}{L^2(\rr d)}^{-2} F{\, \widehat *\,}(V_\phi \phi)
\end{equation}
on $\Sigma _1'(\rr {2d})$. The operator $P_\phi$ has the following properties:

\par

\begin{lemma}\label{Lemma:Proj}
Let $\phi \in \Sigma _1(\rr d)$. Then the following is true:
\begin{enumerate}
\item $P_\phi$ in \eqref{Eq:ProjOp} is a continuous projection from
$\Sigma _1'(\rr {2d})$ to
$$
V_\phi (\Sigma _1'(\rr d))
\equiv
\sets {V_\phi f}{f\in \Sigma _1'(\rr d)}
\subseteq \Sigma _1'(\rr {2d})\bigcap C^\infty (\rr {2d}) \text ;
$$

\vrum

\item $P_\phi$ in \eqref{Eq:ProjOp} restricts to a continuous
projection from $\Sigma _1(\rr {2d})$ to
$$
V_\phi (\Sigma _1(\rr d))
\equiv
\sets {V_\phi f}{f\in \Sigma _1(\rr d)}
\subseteq \Sigma _1(\rr {2d}) \text ;
$$

\vrum

\item if $\mascB$ is an invariant BF-space on $\rr {2d}$, then
$P_\phi$ is continuous on $\mascB$.
\end{enumerate}
\end{lemma}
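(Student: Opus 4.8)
The plan is to derive all three parts from the single reproducing identity behind \eqref{Eq:TwistConvSTFT}, using throughout that $V_\phi\phi\in\Sigma_1(\rr{2d})$, which holds by Remark \ref{Rem:STFT} since $\phi\in\Sigma_1(\rr d)$. First I would dispose of the continuity statements. Since $P_\phi$ is, by \eqref{Eq:ProjOp}, nothing but twisted convolution by the fixed window $\nm\phi{L^2(\rr d)}^{-2}V_\phi\phi\in\Sigma_1(\rr{2d})$, continuity on $\Sigma_1'(\rr{2d})$ and on $\Sigma_1(\rr{2d})$ is immediate from the continuity of the twisted convolution maps $\Sigma_1'(\rr{2d})\times\Sigma_1(\rr{2d})\to\Sigma_1'(\rr{2d})$ and $\Sigma_1(\rr{2d})\times\Sigma_1(\rr{2d})\to\Sigma_1(\rr{2d})$ recalled just before the lemma. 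For part (3) I would use the pointwise bound $|F\,\widehat*\,G|\le (2\pi)^{-d/2}(|F|*|G|)$, valid because the integral kernel of $\widehat*$ has modulus one, together with the solidity axiom (2) of an invariant BF-space and Minkowski's inequality \eqref{Eq:MinkIneq} (Proposition \ref{p1.4BFA}); as $V_\phi\phi\in\Sigma_1(\rr{2d})\subseteq L^1_{(v)}(\rr{2d})$, this gives $\nm{P_\phi F}{\mascB}\lesssim \nm F{\mascB}$, so $P_\phi$ is continuous on $\mascB$.

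Next I would show that $P_\phi$ fixes the short-time Fourier transform range. Specialising \eqref{Eq:TwistConvSTFT} to $\phi_1=\phi_2=\phi_3=\phi$ yields $\nm\phi{L^2(\rr d)}^2\,V_\phi f=(V_\phi f)\,\widehat*\,(V_\phi\phi)$, hence $P_\phi(V_\phi f)=V_\phi f$ for every $f\in\Sigma_1'(\rr d)$, and likewise for $f\in\Sigma_1(\rr d)$. Consequently $V_\phi(\Sigma_1'(\rr d))$ (resp. $V_\phi(\Sigma_1(\rr d))$) is contained in the range of $P_\phi$ and consists of fixed points. That $V_\phi(\Sigma_1'(\rr d))\subseteq \Sigma_1'(\rr{2d})\cap C^\infty(\rr{2d})$ is just the smoothness of $(x,\xi)\mapsto V_\phi f(x,\xi)$, which follows from the smooth dependence of the time-frequency shifts of $\phi$ on $(x,\xi)$ inside $\Sigma_1(\rr d)$.

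It remains to identify the range exactly, which is the heart of the matter. I would introduce the adjoint short-time Fourier transform $V_\phi^*$, defined by duality through $\langle V_\phi^*F,\psi\rangle=\langle F,V_\phi\psi\rangle$; since $\psi\mapsto V_\phi\psi$ is continuous from $\Sigma_1(\rr d)$ to $\Sigma_1(\rr{2d})$ (Remark \ref{Rem:STFT}), $V_\phi^*$ is continuous from $\Sigma_1'(\rr{2d})$ to $\Sigma_1'(\rr d)$ and maps $\Sigma_1(\rr{2d})$ into $\Sigma_1(\rr d)$. The key step is the reproducing identity $V_\phi(V_\phi^* F)=F\,\widehat*\,(V_\phi\phi)$, obtained by pairing both sides against a test function and using that the reproducing kernel $\langle \pi(w)\phi,\pi(z)\phi\rangle$ of two time-frequency shifts of $\phi$ equals a phase factor times $V_\phi\phi$ evaluated at $z-w$, which is precisely the kernel of twisted convolution by $V_\phi\phi$. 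Granting this, $P_\phi F=\nm\phi{L^2(\rr d)}^{-2}F\,\widehat*\,(V_\phi\phi)=V_\phi\big(\nm\phi{L^2(\rr d)}^{-2}V_\phi^*F\big)\in V_\phi(\Sigma_1'(\rr d))$, with the analogue on $\Sigma_1(\rr{2d})$. Combined with the previous paragraph this shows the range of $P_\phi$ equals $V_\phi(\Sigma_1'(\rr d))$ (resp. $V_\phi(\Sigma_1(\rr d))$), and idempotency is then automatic, since $P_\phi F=V_\phi g$ gives $P_\phi^2F=P_\phi(V_\phi g)=V_\phi g=P_\phi F$; hence $P_\phi$ is a continuous projection onto the asserted space.

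The main obstacle is exactly this reproducing identity at the distributional level: the computation of $\langle \pi(w)\phi,\pi(z)\phi\rangle$ is phase-sensitive, and one must justify interchanging the distributional pairing defining $V_\phi^*F$ with the integral against the kernel. Both are legitimate because the window $V_\phi\phi$ lies in $\Sigma_1(\rr{2d})$ and $V_\phi\psi$ depends continuously on $\psi$, so the pairing converges absolutely after testing. Everything else in the argument reduces to the continuity of twisted convolution, the solidity axiom, and Minkowski's inequality, all of which are available from the preliminaries.
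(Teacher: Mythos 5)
Your proposal is correct and follows essentially the same route as the paper: both establish that $P_\phi$ fixes $V_\phi(\Sigma_1'(\rr d))$ via \eqref{Eq:TwistConvSTFT}, identify the range by showing $P_\phi F=V_\phi\bigl(\nm \phi{L^2}^{-2}V_\phi^*F\bigr)$ using the adjoint $V_\phi^*$ and the reproducing (Fourier inversion) identity, and prove (3) from the pointwise domination $|P_\phi F|\lesssim |F|*|V_\phi\phi|$ combined with solidity and \eqref{Eq:MinkIneq}.
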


\par

Related results can essentially be found in e.{\,}g. \cite{Gc1,Fo1}.
In order to be self-contained, we here give a short proof.

\par

\begin{proof}
By \eqref{Eq:TwistConvSTFT} it is clear that $P_\phi$ is the identity
map on $V_\phi (\Sigma _1'(\rr d))$ and thereby on $V_\phi (\Sigma _1(\rr d))$.

\par

Let $V_\phi ^*$ be the $L^2$-adjoint of $V_\phi$. That is,
$V_\phi ^*F$ satisfies
$$
(V_\phi ^*F,\psi )_{L^2(\rr d)} = (F,V_\phi \psi )_{L^2(\rr {2d})},
\qquad F\in \Sigma _1'(\rr {2d}),\ \psi \in \Sigma _1(\rr d).
$$
By the continuity properties of $V_\phi$ on $\Sigma _1$ and $\Sigma _1'$,
it follows that $V_\phi ^*$ is continuous from $\Sigma _1'(\rr {2d})$
to $\Sigma _1'(\rr d)$ and restricts to a continuous map
from $\Sigma _1(\rr {2d})$ to $\Sigma _1(\rr d)$.

\par

By a straight-forward application of Fourier's inversion formula
it follows that
$$
P_\phi F = V_\phi f
\qquad \text{when}\quad f= \nm \phi{L^2}^{-2}V_\phi ^*F,
$$
which shows that the images of $\Sigma _1'(\rr {2d})$
and $\Sigma _1(\rr {2d})$ under $P_\phi $ equals
$V_\phi (\Sigma _1'(\rr d))$ and $V_\phi (\Sigma _1(\rr d))$,
respectively. This gives (1) and (2).

\par

If $\mascB$ is an invariant BF-space on $\rr {2d}$ and $F\in \mascB$,
then it follows from the definitions that
$$
|P_\phi F |\lesssim |F|*\Phi ,
$$
where $\Phi = |V_\phi \phi|$ belongs to $L^1_{(v)}(\rr {2d})$ for every choice
of $v\in \mascP _E(\rr {2d})$. Hence, a combination of (2) in Definition
\ref{bfspaces1} and \eqref{Eq:MinkIneq} gives
$P_\phi F \in \mascB$, and
$$
\nm {P_\phi F}{\mascB} \lesssim \nm F{\mascB}\nm \Phi{L^1_{(v)}},
$$
for some $v\in \mascP _E(\rr {2d})$, and the continuity of $P_\phi$
on $\mascB$ follows. This gives (3).
\end{proof}

\par

\begin{lemma}\label{Lemma:GSembBF}
If $\mascB$ is an invariant BF-space of $\rr d$, then
$$
L^\infty _{(v)}(\rr d)\hookrightarrow \mascB
$$
for some $v\in \mascP _E(\rr d)$. Then 
\end{lemma}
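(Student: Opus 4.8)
The plan is to choose a weight $v\in\mascP _E(\rr d)$ that grows fast enough to absorb the translation bound of $\mascB$, and then to realise an arbitrary $f\in L^\infty _{(v)}(\rr d)$ as an absolutely convergent series in $\mascB$ indexed by the lattice $\Lambda _E$. As a preliminary step I would fix the characteristic function $\chi$ of $\kappa (E)$ and set $\chi _j=\chi (\cdo -j)$ for $j\in\Lambda _E$, so that the translates $j+\kappa (E)$ tile $\rr d$. Since $\kappa (E)$ is compact, a suitably scaled Gaussian $g\in\Sigma _1(\rr d)$ satisfies $g\ge 1$ on $\kappa (E)$, whence $\chi \le g$; solidity (condition (2) in Definition \ref{bfspaces1}) together with the continuous inclusion $\Sigma _1\hookrightarrow \mascB$ then gives $\chi \in\mascB$. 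Finally, the translation estimate \eqref{translmultprop1} yields $\nm {\chi _j}{\mascB}\le Cv_0(j)\nm \chi{\mascB}$, where $v_0\in\mascP _E(\rr d)$ is the weight with respect to which $\mascB$ is invariant.

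Next I would fix $v=v_0\cdot e^{|\cdo |}$. By Lemma \ref{compares1}(1) this is again a moderate weight, and $\sum _{j\in\Lambda _E}v_0(j)/v(j)=\sum _j e^{-|j|}<\infty$. For $f\in L^\infty _{(v)}(\rr d)$, moderateness of $v$ gives $v(x)\asymp v(j)$ on $j+\kappa (E)$, so $|f\chi _j|\le Cv(j)^{-1}\nm f{L^\infty _{(v)}}\chi _j$ pointwise; solidity then upgrades this to $f\chi _j\in\mascB$ with $\nm {f\chi _j}{\mascB}\lesssim v_0(j)v(j)^{-1}\nm f{L^\infty _{(v)}}\nm \chi{\mascB}$. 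Summing over $j$ and using the summability above, I obtain $\sum _j\nm {f\chi _j}{\mascB}\lesssim \nm f{L^\infty _{(v)}}$.

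By completeness of $\mascB$ the series $\sum _j f\chi _j$ then converges in $\mascB$ to some $F$ with $\nm F{\mascB}\lesssim \nm f{L^\infty _{(v)}}$. Since its partial sums converge to $f$ both pointwise almost everywhere and in $L^1_{loc}(\rr d)$ (dominated convergence, as $|f|\le \nm f{L^\infty _{(v)}}/v\in L^1_{loc}$), and convergence in $\mascB$ forces convergence in $L^1_{loc}$, the two limits coincide and $F=f$. Hence $f\in\mascB$ with $\nm f{\mascB}\lesssim \nm f{L^\infty _{(v)}}$, which is the asserted continuous embedding. I expect the main obstacle to be the summability requirement $\sum _j v_0(j)/v(j)<\infty$: it is what dictates the growth of $v$ and explains why only the existence of some $v$ can be claimed, and it must be paired with the identification of the $\mascB$-limit $F$ with $f$, for which I rely on the continuous inclusion $\mascB\hookrightarrow L^1_{loc}(\rr d)$ inherent in the notion of a solid BF-space.
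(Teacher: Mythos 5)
Your argument is correct in substance, but it takes a genuinely different route from the paper. The paper never decomposes over a lattice: it fixes $h_0$ from the continuous embedding $\Sigma _1(\rr d)\hookrightarrow \mascB$, sets $\omega _0=e^{-2|\cdo |/h_0}*e^{-|\cdo |^2/2}$ and $v=1/\omega _0$, verifies via Gevrey-type estimates on $D^\beta \omega _0$ that $\nm {\omega _0}{\mascB}<\infty$, and then concludes with a \emph{single} application of solidity from the pointwise bound $|f|\le \nm {fv}{L^\infty}\,\omega _0$. Thus the paper's weight $v\asymp e^{2|\cdo |/h_0}$ is dictated by the embedding constant of $\Sigma _1$ into $\mascB$ and the proof uses neither translation invariance nor completeness of $\mascB$; your weight $v=v_0e^{|\cdo |}$ is instead dictated by the translation bound \eqref{translmultprop1} together with the summability of $e^{-|j|}$ over the lattice. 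Your route is more elementary (no mollification, no derivative estimates) but pays for it at the final identification step: you must know that convergence in $\mascB$ implies convergence in $L^1_{loc}$, which Definition \ref{bfspaces1} states only as a set inclusion $\mascB \subseteq L^1_{loc}(\rr d)$; it is standard for solid BF-spaces in the sense of Feichtinger's (A.3), which the paper invokes, so this is a reliance on an implicit hypothesis rather than a gap, but be aware the paper's one-majorant argument sidesteps the issue entirely. You could likewise sidestep it by observing that your construction really shows the single function $G=\sum _j v(j)^{-1}\chi _j\asymp 1/v$ (a pointwise well-defined function, the supports being essentially disjoint) lies in $\mascB$, and then dominating $|f|$ by $\nm f{L^\infty _{(v)}}G$ exactly as the paper does with $\omega _0$ --- though proving $G\in \mascB$ still requires identifying the $\mascB$-limit of the partial sums, so the cleanest fix is simply to cite the continuous embedding $\mascB \hookrightarrow L^1_{loc}$ explicitly as part of the standing assumptions on BF-spaces.
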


\par

\begin{proof}
Since $\Sigma _1(\rr d)$ is continuously embedded in $\mascB$ we have
$$
\nm f{\mascB} \lesssim \sup _{\beta \in \nn d} 
\left (
\frac {\nm {(D^\beta f) \cdot e^{|\cdo |/h_0}}{L^\infty}}{h^{|\beta |}_0\beta !}
\right )
$$
for some $h_0>0$. Let $\omega = e^{-2|\cdo |/h_0}$, $\omega _0
=\omega *e^{-|\cdo |^2/2}$ and let $v=1/\omega _0$. Then
\cite[Proposition 1.6]{AbCoTo} shows that 
$$
|D^\beta \omega _0| \lesssim h^{|\beta |}\beta !\, e^{-2|\cdo |/h_0}
$$
for every $h>0$. By choosing $h<h_0$ we get 
\begin{multline*}
\nm {\omega _0}{\mascB}
\lesssim
\sup _{\beta \in \nn d}
\left (
\frac {\nm {D^\beta \omega _0 \cdot e^{|\cdo |/h_0}}{L^\infty}}
{h_0^{|\beta |}\beta !}
\right )
\lesssim
\sup _{\beta \in \nn d}
\left (
\frac {h^{|\beta |}\beta !\nm {\omega _0 \cdot e^{|\cdo |/h_0}}{L^\infty}}
{h_0^{|\beta |}\beta !}
\right )
\\[1ex]
\lesssim
\nm {e^{-2|\cdo |/h_0}\cdot e^{|\cdo |/h_0}}{L^\infty} =1<\infty .
\end{multline*}

\par

Hence, if $f\in L^\infty _{(v)}(\rr d)$, then
$$
\nm f{\mascB} \lesssim \nm {\omega _0}{\mascB}\nm {f\cdot v}{L^\infty}
\asymp  \nm f{L^\infty _{(v)}},
$$
and the result follows.
\end{proof}

\par

\begin{lemma}
Let $\mascB$ be an invariant BF-space on $\rr d$
and $\omega \in \mascP _E(\rr d)$. Then 
$$
\mascB _{(\omega )} \equiv \sets {f\in L^1_{loc}(\rr d)}{f\cdot \omega \in \mascB}
$$
is an invariant BF-space under the norm
$$
f\mapsto \nm f{\mascB _{(\omega )}}
\equiv
\nm {f\cdot \omega}{\mascB}.
$$
\end{lemma}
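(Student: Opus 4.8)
The plan is to verify the two defining properties of an invariant BF-space (Definition \ref{bfspaces1} with $r=1$) for the weighted space $\mascB _{(\omega )}$, using the corresponding properties of $\mascB$ together with the moderateness of $\omega$. First I would observe that the map $f\mapsto f\cdot \omega$ is a linear bijection from $\mascB _{(\omega )}$ onto $\mascB$ which is isometric by the very definition of the norm $\nm f{\mascB _{(\omega )}}=\nm {f\cdot \omega}{\mascB}$. Since $\mascB$ is complete, this isometry immediately transfers completeness to $\mascB _{(\omega )}$, so it is a Banach space; the quasi-norm axioms \eqref{Eq:WeakTriangle1} and \eqref{Eq:WeakTriangle2} with $r=1$ are inherited in the same way. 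I would also check that $\Sigma _1(\rr d)$ embeds continuously into $\mascB _{(\omega )}$: for $f\in \Sigma _1$ we have $f\cdot \omega \in \Sigma _1$ as well (because $\omega \in \mascP _E$ is at most exponentially growing by \eqref{Eq:ModWeightPropCons}, so multiplication by $\omega$ preserves the Gelfand--Shilov decay), whence $f\cdot \omega \in \mascB$ with the appropriate continuity.

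Next I would establish the \emph{solidity} condition (2). Suppose $f,g\in L^1_{loc}$ with $g\in \mascB _{(\omega )}$ and $|f|\le |g|$. Then $|f\cdot \omega |=|f|\cdot \omega \le |g|\cdot \omega = |g\cdot \omega |$ pointwise, and $g\cdot \omega \in \mascB$; solidity of $\mascB$ gives $f\cdot \omega \in \mascB$ with $\nm {f\cdot \omega}{\mascB}\le C\nm {g\cdot \omega}{\mascB}$, i.e. $\nm f{\mascB _{(\omega )}}\le C\nm g{\mascB _{(\omega )}}$. This step is essentially automatic since the weight $\omega$ is the same on both sides and cancels.

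The main work is the \emph{translation invariance} condition (1), and this is where the moderateness of $\omega$ enters crucially, introducing a new submultiplicative weight. For $x\in \rr d$ and $f\in \mascB _{(\omega )}$ we must control $\nm {f(\cdo -x)}{\mascB _{(\omega )}}=\nm {f(\cdo -x)\cdot \omega}{\mascB}$. The obstacle is that $f(\cdo -x)\cdot \omega$ is \emph{not} a translate of $f\cdot \omega$, since $\omega$ is not translation invariant. The remedy is to write, at the point $y$, $\omega (y)=\omega ((y-x)+x)\le \omega (y-x)\,v_\omega (x)$ using the $v_\omega$-moderateness \eqref{e1.1} of $\omega$ for a submultiplicative weight $v_\omega \in \mascP _E$. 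Hence pointwise $|f(y-x)\,\omega (y)|\le v_\omega (x)\,|(f\cdot \omega )(y-x)|$, so by solidity and then the translation bound \eqref{translmultprop1} for $\mascB$ (with its weight $v_0$, submultiplicative by the preceding proposition),
\begin{equation*}
\nm {f(\cdo -x)\cdot \omega}{\mascB}\le C\,v_\omega (x)\,\nm {(f\cdot \omega )(\cdo -x)}{\mascB}\le C'\,v_\omega (x)\,v_0(x)\,\nm {f\cdot \omega}{\mascB}.
\end{equation*}
Thus $\mascB _{(\omega )}$ satisfies \eqref{translmultprop1} with the submultiplicative weight $v=v_\omega \cdot v_0\in \mascP _E(\rr d)$ (a product of submultiplicative weights is submultiplicative), which is exactly what is required.

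Assembling these three verifications shows $\mascB _{(\omega )}$ is an invariant BF-space with respect to $v_\omega \cdot v_0$. I expect the only genuinely delicate point to be keeping track of the translation weight: one must record that the new controlling weight is the product of the old one $v_0$ for $\mascB$ and the moderateness weight $v_\omega$ of $\omega$, rather than hoping for invariance with the original $v_0$ alone. Everything else follows mechanically from transporting the structure of $\mascB$ through the isometry $f\mapsto f\cdot \omega$.
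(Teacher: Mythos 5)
Your overall strategy coincides with the paper's: transport the structure of $\mascB$ through the isometry $f\mapsto f\cdot \omega$ and verify conditions (1) and (2) of Definition \ref{bfspaces1} directly, with the moderateness of $\omega$ supplying the extra factor $v_\omega (x)$ in the translation estimate. Your treatment of solidity and of translation invariance is correct, and in fact more explicit than the paper's, which dismisses these points as ``straight-forward computations \dots provided $v$ has been modified in suitable ways''; your identification of the new controlling weight as the product $v_\omega \cdot v_0$ is exactly the modification the paper alludes to.

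There is, however, one step that does not survive scrutiny: the claim that $f\cdot \omega \in \Sigma _1(\rr d)$ whenever $f\in \Sigma _1(\rr d)$. A weight $\omega \in \mascP _E(\rr d)$ is only assumed to satisfy $\omega ,1/\omega \in L^\infty _{loc}(\rr d)$; it need not be continuous, let alone smooth, so $f\cdot \omega$ is in general not a $C^\infty$ function and cannot lie in $\Sigma _1(\rr d)$. What is true is only the pointwise bound $|f(x)\omega (x)|\lesssim e^{-r|x|}$ for every $r>0$ (by \eqref{Eq:ModWeightPropCons}), and to convert this decay into membership in $\mascB$ one must exhibit a dominating element of $\mascB$ and invoke solidity. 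This is precisely what the paper's Lemma \ref{Lemma:GSembBF} provides: $L^\infty _{(v)}(\rr d)\hookrightarrow \mascB$ for some $v\in \mascP _E(\rr d)$, whence $\Sigma _1(\rr d)\hookrightarrow L^\infty _{(\omega \cdot v)}(\rr d)\hookrightarrow \mascB _{(\omega )}$, which is the route the paper takes. Replacing your smoothness claim by this chain of embeddings closes the gap; the rest of your argument stands.
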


\par

\begin{proof}
Let $v$ be as in .
By Lemma \ref{Lemma:GSembBF}, $L^\infty _{(\omega \cdot v)}
\hookrightarrow \mascB _{(\omega )}$. Since
$$
\Sigma _1(\rr d)\hookrightarrow L^\infty _{(\omega \cdot v)}(\rr d),
$$
it follows that $\Sigma _1(\rr d)$ is continuously embedded in $\mascB _{(\omega )}$.

\par

By straight-forward computations it follows that both (1) and (2) in Definition
\ref{bfspaces1} are fulfilled with $\mascB _{(\omega )}$ in place of $\mascB$ provided $v$
has been modified in suitable ways.
\end{proof}


\par

\begin{proof}[Proof of Proposition \ref{Prop:LargestModSpace}]
Let $\phi \in \Sigma _1(\rr d)\setminus 0$ be fixed, $\mascB _{(\omega )}$
be the Banach space which consists of
all $F\in L^1_{loc}(\rr {2d})$ such that
$$
\nm F{\mascB _{(\omega )}} \equiv \nm {F\cdot \omega}{\mascB}.
$$
Since $\omega$ is a moderate function, it follows by the previous lemma
$\mascB _{(\omega )}$ is an invariant
BF-space.

\par

Let $\{ f_j\} _{j=1}^\infty$ be a Cauchy-sequence in $M(\omega ,\mascB)$.
Then $\{ V_\phi f_j\} _{j=1}^\infty$ is a Cauchy-sequence in $\mascB
_{(\omega )}$. Since $\mascB _{(\omega )}$ is a Banach space, there is
a unique $F\in \mascB _{(\omega )}$ such that
$$
\lim _{j\to \infty}\nm {V_\phi f_j -F}{\mascB _{(\omega )}} =0.
$$
Let $f=\nm \phi{L^2}^{-2}V_\phi ^*F$. Then $V_\phi f=P_\phi F$
belongs to $\mascB _{(\omega)}$, in view of Lemma \ref{Lemma:Proj}
(3). Since $P_\phi$ is continuous on
$\mascB _{(\omega )}$ and satisfies the mapping properties given in
Lemma \ref{Lemma:Proj}, we get
\begin{multline*}
\lim _{j\to \infty} \nm {f_j-f}{M(\omega ,\mascB)}
=
\lim _{j\to \infty} \nm {V_\phi (f_j-f)}{\mascB _{(\omega )}}
\\[1ex]
=\lim _{j\to \infty} \nm {P_\phi (V_\phi f_j - F)}{\mascB _{(\omega )}}
\lesssim
\lim _{j\to \infty} \nm {V_\phi f_j - F}{\mascB _{(\omega )}} =0.
\end{multline*}
Hence, $f_j\to f$ in $M(\omega ,\mascB)$, and the completeness 
of $M(\omega ,\mascB)$ follows. Consequently, $M(\omega ,\mascB)$
is a Banach space.

\par

The embedding \eqref{Eq:LargestModSpace} is an immediate consequence
of \cite[Theorem 2.5]{To26} and the fact that $M(\omega ,\mascB)$
is a Banach space.
\end{proof}

\par

If we assume that $\mascB$ is an invariant QBF-space (instead of invariant
BF-space) with respect of $v_0$, then it seems to be an open question 
wether \eqref{Eq:LargestModSpace} might be violated or not.

\par

Before studying compactness of embeddings between
modulation spaces, we first consider the related continuity questions.

\par

\begin{thm}\label{continuous}
Let $\omega _1$ and $\omega _2$ be weights on $\rr {2d}$,
$\mascB$ be an invariant 
BF-space on $\rr {2d}$ with respect to $v \in \mathscr{P}_E$ or a mixed quasi-normed
space of Lebesgue type, and let $i$ be the injection
\begin{equation}\label{embedding2}
i\! : M(\omega _1,\mascB )\to
M(\omega _2,\mascB ).
\end{equation}
Then the following is true:
\begin{enumerate}
\item if ${\omega_2}/{\omega_1}$ is bounded,
then the map \eqref{embedding2} is continuous;

\vrum

\item if in addition $\omega _1,\omega _2\in \mascP
_E(\rr {2d})$ and $v$ is bounded, then the map \eqref{embedding2}
is continuous, if and only if ${\omega_2}/{\omega_1}\leq C$ for some $C>0$.
\end{enumerate}
\end{thm}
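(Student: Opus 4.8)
The plan is to treat the two parts separately: part~(1) follows directly from the solidity of $\mascB$, while the nontrivial implication in part~(2) is obtained by testing the boundedness inequality coming from continuity against a family of time-frequency shifts of the Gaussian window, for which the modulation norms can be computed up to uniform constants.

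For part~(1), assume $\omega _2/\omega _1$ is bounded, say $\omega _2\le C\omega _1$ pointwise. Then for every $f$ we have the pointwise bound $|V_\phi f\cdot \omega _2|\le C|V_\phi f\cdot \omega _1|$, and condition~(2) of Definition~\ref{bfspaces1} (solidity) immediately yields
$$
\nm f{M(\omega _2,\mascB )}=\nm {V_\phi f\cdot \omega _2}{\mascB }\lesssim \nm {V_\phi f\cdot \omega _1}{\mascB }=\nm f{M(\omega _1,\mascB )}.
$$
Thus $i$ is bounded, hence continuous. Since this uses nothing beyond solidity, it applies equally when $\mascB$ is an invariant BF-space and when it is a mixed quasi-normed Lebesgue space.

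For part~(2), the ``if'' direction is contained in part~(1), so I would prove the converse. Fix $X_0=(x_0,\xi _0)\in \rr {2d}$ and let $f_{X_0}$ be the time-frequency shift by $X_0$ of the Gaussian $\phi$ in \eqref{Eq:BargmannSTFTlink}. By the covariance of the short-time Fourier transform, $|V_\phi f_{X_0}(X)|=|V_\phi \phi (X-X_0)|$, so $V_\phi f_{X_0}$ is a fixed Gaussian profile centred at $X_0$. The key step is the two-sided estimate
$$
\nm {f_{X_0}}{M(\omega _j,\mascB )}\asymp \omega _j(X_0),\qquad j=1,2,
$$
with constants independent of $X_0$. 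For the upper bound, pick a submultiplicative $u_j$ with $\omega _j$ being $u_j$-moderate; then $\omega _j(X)\lesssim \omega _j(X_0)u_j(X-X_0)$, and since $|V_\phi \phi|$ decays like a Gaussian while $u_j\lesssim e^{r_j|\cdo |}$, the product $G_j=|V_\phi \phi |\,u_j$ is dominated by a Gaussian in $\Sigma _1(\rr {2d})\hookrightarrow \mascB$; solidity together with boundedness of $v$ (translations act boundedly on $\mascB$) gives $\nm {f_{X_0}}{M(\omega _j,\mascB )}\lesssim \omega _j(X_0)\nm {G_j}{\mascB }$. For the lower bound, on a fixed ball $B$ about the origin one has $|V_\phi \phi |\ge c>0$ and, by moderateness, $\omega _j(X_0+Y)\gtrsim \omega _j(X_0)$ for $Y\in B$; comparing $|V_\phi f_{X_0}|\,\omega _j$ with $\omega _j(X_0)\chi _{X_0+B}$ and invoking solidity in the form $|K|\le |H|\Rightarrow \nm K{\mascB }\le C\nm H{\mascB }$, followed by translation invariance, yields $\nm {f_{X_0}}{M(\omega _j,\mascB )}\gtrsim \omega _j(X_0)\nm {\chi _B}{\mascB }$.

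Granting this equivalence, the converse is immediate: continuity of $i$ means $\nm {f_{X_0}}{M(\omega _2,\mascB )}\le C\nm {f_{X_0}}{M(\omega _1,\mascB )}$ for all $X_0$, and substituting the two-sided bounds gives $\omega _2(X_0)\lesssim \omega _1(X_0)$ uniformly, i.e. $\omega _2/\omega _1$ is bounded. I expect the uniform lower bound $\nm {f_{X_0}}{M(\omega _j,\mascB )}\gtrsim \omega _j(X_0)$ to be the main obstacle: it is precisely here that one must combine the non-vanishing of $V_\phi \phi$ at the origin, the two-sided control of $\omega _j$ on a fixed ball coming from moderateness, and the boundedness of $v$ to transfer the localization from $X_0$ back to the origin without losing constants. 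Because the mixed quasi-normed Lebesgue spaces are solid and translation invariant with the same uniformity, the argument carries over verbatim to that case.
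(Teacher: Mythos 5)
Your part (1) is the same one--line solidity argument the paper gives. For part (2) both you and the paper test the continuity inequality on time--frequency shifts of the Gaussian and both use the identical upper bound $\nm {f_{X_0}}{M(\omega _1,\mascB )}\lesssim \omega _1(X_0)$ obtained from the covariance of the short-time Fourier transform, moderateness of $\omega _1$ and boundedness of $v$. The divergence is in how the resulting bound in $M(\omega _2,\mascB )$ is turned into pointwise information on $\omega _2$: the paper invokes Lemma \ref{Lemma:MBInfty}, i.{\,}e. the embedding $M(\omega _2,\mascB )\hookrightarrow M^\infty _{(\omega _2)}$, which it proves by a duality argument with the $L^2$-dual $\mascB '$ and Feichtinger's minimality principle, and then evaluates $\omega _2|V_\phi f_k|$ at the points $X_k$ along a sequence with $\omega _2/\omega _1\ge k$ to reach a contradiction. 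You instead prove the uniform lower bound $\nm {f_{X_0}}{M(\omega _2,\mascB )}\gtrsim \omega _2(X_0)\nm {\chi _B}{\mascB}$ directly, by minorizing $|V_\phi f_{X_0}|\,\omega _2$ by a multiple of $\omega _2(X_0)\chi _{X_0+B}$ (using that $V_\phi \phi$ is continuous and nonzero at the origin and that $\omega _2$ is two-sidedly controlled on $X_0+B$ by moderateness) and then appealing to solidity and to the fact that bounded $v$ makes translations uniformly bounded in both directions; this is exactly the point you flag as the main obstacle, and it goes through. Both arguments are correct. Your route is more elementary and self-contained: it bypasses the duality lemma entirely and hence applies verbatim to the quasi-Banach mixed Lebesgue case, where duality is delicate and the paper instead leans on Remark \ref{Rem:LargestModSpace}. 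The paper's route has the advantage of isolating the $M^\infty _{(\omega _2)}$-embedding as a lemma that is reused in the proof of the compactness theorem (Theorem \ref{compact}).
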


\par

The next lemma is related to Remark \ref{Rem:LargestModSpace}
and is needed verify the previous theorem.

\par

\begin{lemma}\label{Lemma:MBInfty}
Let $v$ be submultiplicative and bounded on $\rr {2d}$, $\mascB$
be an invariant BF-space 
with respect $v$ which is continuously embedded
in $\Sigma _1'(\rr {2d})$, and let $\omega
\in \mascP _E(\rr {2d})$. Then
$M(\omega ,\mascB) \hookrightarrow M^\infty_{(\omega)} (\rr d)$.
\end{lemma}

\par
\begin{proof}
Let $\mascB '$ be the $L^2$-dual of $\mascB$. Then it follows by
straight-forward computation that both $\mascB$ and $\mascB '$ are
translation invariant Banach spaces of order $1$ which contain
$\Sigma _1(\rr {2d})$.
%
Let $\phi \in \Sigma _1(\rr d)$ be such that
$\nm \phi {L^2}=1$, and let
$$
\Omega = \sets {g\in \Sigma _1 (\rr d)}{\nm g{M^1_{(1/\omega )}}\le 1}.
$$
Since $(M^1_{(1/\omega )}(\rr d))' = M^\infty _{(\omega )}(\rr d)$ by
a unique extension of the $L^2$-form on $\Sigma _1(\rr d)$ and that
$\Sigma _1$ is dense in $M^1_{(1/\omega )}$, we get
\begin{multline*}
\nm f{M^\infty _{(\omega )}}
\asymp
\sup _{g\in \Omega} |(f,g)_{L^2(\rr d)}|
=
\sup _{g\in \Omega} |(V_\phi f\cdot \omega ,V_\phi g/\omega )_{L^2(\rr {2d})}| 
\\[1ex]
\le
\sup _{g\in \Omega} \nm {V_\phi f\cdot \omega }{\mascB}
\nm {V_\phi g/\omega}{\mascB '}
\lesssim
\nm {V_\phi f\cdot \omega }{\mascB}
\asymp
\nm f{M(\omega ,\mascB )}.
\end{multline*}
Here we have used the fact that
$$
\nm {V_\phi g/\omega}{\mascB '}
\asymp
\nm g{M(1/\omega ,\mascB ' \, )}
\lesssim
\nm g{M^1_{(1/\omega )}}<\infty ,
$$
which follows from
Feichtinger's minimality principle (cf.
the extension \cite[Theorem 2.4]{To26} of
\cite[Theorem 12.1.9]{Gc1}).
%
%
%
%
\end{proof}

The previous lemma enables us to verify Theorem \ref{continuous}:

\begin{proof}[Proof of Theorem \ref{continuous}]

Claim (1) is an immediate consequence of the boundedness of
${\omega_2}/{\omega_1}$ and of $\mathscr{B}$ being an invariant BF-space. 

\medspace

Assume instead that the embedding $i$ in \eqref{embedding2} is
continuous and all assumptions of the second claim hold. Claim (2) follows
if we have proved the boundedness of
$\omega _2/\omega _1$. We prove this boundedness by 
contradiction. We consider, that there is a sequence $(x_k,\xi _k)
\in \rr {2d}$ with $|(x_k,\xi _k)| \rightarrow \infty$ if
$k \rightarrow \infty$ fulfilling
\begin{align*}
    \frac{\omega_2(x_k,\xi _k)}{\omega _1(x_k,\xi _k)}
    \geq k \qquad \text{for all } k \in \mathbf {N}.
\end{align*}
Let $\phi$ be as in \eqref{Eq:BargmannSTFTlink} and set
$$
f_k=\frac{1}{\omega_1(X_k)}
e^{i\langle \cdo ,\xi _k\rangle}\phi (\cdo
-x_k),\qquad X_k=(x_k,\xi _k).
$$

\par

In order to show that the sequence $f_k$ is bounded in
$M{(\omega_1,\mascB)}$, we choose a submultiplicative
weight $v_0\in \mascP_E (\rr {2d})$ such that $\omega _1$
is $v_0$-moderate and that $v_0\geq 1$.

\par

By
$$
V_{\phi}(e^{i \scal \cdo \xi}f(\cdo -x))(y,\eta)
=
e^{i\scal x{\eta-\xi}} (V_{\phi}f)(y-x, \eta-\xi)
$$
(see e.{\,}g. \cite{Gc2}), we get
\begin{align*}
\|e^{i\langle \cdo ,\xi \rangle} f(\cdo
-x)\|_{M(\omega_1, \mascB)}
\leq C \omega_1(x,\xi )
\| f\|_{M(v_0,\mascB)},\qquad f\in M{(v_0, \mascB)}.
\end{align*}
This gives 
\begin{align*}
\|f_k \| _{M(\omega_1, \mascB)}
=\frac{1}{\omega_1(X_k)}\| e^{i\langle \cdo ,\xi _k\rangle}\phi (\cdo
-x_k)\| _{M(\omega_1, \mascB)}
\le C \| \phi \|_{M(v, \mascB)}< \infty,
\end{align*}
where $C$ is independent of $k \in \mathbb{N}$.
Then the hypothesis provides the boundedness of the sequence $\{ f_k\}$
in ${M(\omega_2, \mascB)}$.

\par

Since $M(\omega_2, \mathscr{B}) \hookrightarrow M^\infty_{(\omega_2)}$ due
to Lemma \ref{Lemma:MBInfty} we
have
\begin{equation}\label{eq1}
\sup_{X\in\rr {2d}}\omega_2(X)|(V_\phi (f_k))(X)|\leq
C\|f_k\|_{M(\omega_2, \mascB)}\leq C \qquad \text{for all } k \in \mathbb{N}
\end{equation}
for some $C>0$. In particular  inequality (\ref{eq1}) yields if we take $z=z_k$
\begin{multline}
\omega_2(X_k)|(V_\phi (f_k))(X_k)|
=
\frac{\omega_2(X_k)}{\omega_1(X_k)}|(V_\phi (e^{i\langle \cdo
,\xi _k\rangle}\phi (\cdo -x_k))(X_k)|
\\
=
\frac{\omega_2(X_k)}{\omega_1(X_k)}|(V_\phi \phi )(0)|
=
(2\pi)^{-\frac d2}\, \frac{\omega_2(X_k)}{\omega_1(X_k)}
\leq
C
\end{multline}
which proves the result.
\end{proof}

We have now the following extension of \cite[Theorem 1.2]{BoTo},
which is our main result.

\par

\begin{thm}\label{compact}
Let $\omega _1,\omega _2\in \mascP _Q(\rr {2d})$,
$v \in \mascP _E(\rr {2d})$ be submultiplicative,
$\mascB$ be an invariant 
BF-space on $\rr {2d}$ with respect to $v$ or a mixed quasi-normed
space of Lebesgue type, and let $i$ be the injection
\begin{equation}\label{embedding1}
i\! : M(\omega _1,\mascB )\to
M(\omega _2,\mascB ).
\end{equation}
Then the following is true:
\begin{enumerate}
\item if ${\omega_2}/{\omega_1}\in L^\infty _0(\rr {2d})$,
then the map \eqref{embedding1} is compact;

\vrum

\item if in addition $\omega _1,\omega _2\in \mascP
_E(\rr {2d})$ and $v$ is bounded, then the map \eqref{embedding1}
is compact, if and only if ${\omega_2}/{\omega_1}\in L^\infty
_0(\rr {2d})$.
\end{enumerate}
\end{thm}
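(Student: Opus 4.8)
The plan is to move the problem to spaces of entire functions via the Bargmann transform and to exploit the compactness inherent in normal families of analytic functions. By Proposition~\ref{Prop:MadAnalIdent} the map $\mathfrak V_d$ is an isometric bijection of $M(\omega_j,\mascB)$ onto $A(\omega_j,\mascB)$, and since $\mathfrak V_d f$ is one and the same entire function irrespective of the weight, the embedding $i$ in \eqref{embedding1} is compact precisely when the identity inclusion $A(\omega_1,\mascB)\hookrightarrow A(\omega_2,\mascB)$ is compact. Throughout I keep the Gaussian window $\phi$ of \eqref{Eq:BargmannSTFTlink}, so that $\nm F{A(\omega_j,\mascB)}=\nm{(U_{\mathfrak V}^{-1}F)\,\omega_j}{\mascB}$, where $U_{\mathfrak V}^{-1}F$ differs from a short-time Fourier transform only by the linear substitution and the Gaussian factor in \eqref{EqUBDef}. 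Part (1) is then the genuine compactness statement, while the necessity in (2) is handled by contradiction using explicit wave packets.

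For (1), let $\{F_k\}$ be bounded in $A(\omega_1,\mascB)$. The decisive first step is to gain control of the $F_k$ on compact sets, and this is exactly where the analyticity enters: for every $r\in(0,1]$ the function $|F|^r$ is subharmonic, whence $|F(z)|^r e^{-r|z|^2/2}\lesssim \int_{|w-z|\le 1}|F(w)|^r e^{-r|w|^2/2}\,dw$, and the right-hand side is a local $L^r$ average of $U_{\mathfrak V}^{-1}F$. Since $\omega_1$ is locally bounded from below by \eqref{Gaussest} and the solid space $\mascB$ embeds continuously into $L^r_{loc}$ (by the closed graph theorem, as $\mascB$ is complete), a bound on $\nm{F_k}{A(\omega_1,\mascB)}$ forces $\{F_k\}$ to be uniformly bounded on each compact subset of $\cc d$. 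By Montel's theorem we pass to a subsequence, still written $\{F_k\}$, converging locally uniformly to an entire $F$. The second step upgrades this to convergence in $A(\omega_2,\mascB)$. Writing $G_{k,l}=U_{\mathfrak V}^{-1}(F_k-F_l)$ and splitting $\rr{2d}$ into $\{|X|\le R\}$ and $\{|X|>R\}$, I estimate the two parts separately, using the $r$-triangle inequality \eqref{Eq:WeakTriangle2} when $\mascB$ is only quasi-Banach. On the far part the hypothesis $\omega_2/\omega_1\in L^\infty_0(\rr{2d})$ gives $\omega_2\le\ep(R)\,\omega_1$ there with $\ep(R)\to0$, so solidity (condition (2) of Definition~\ref{bfspaces1}) yields $\nm{G_{k,l}\,\omega_2\,\chi_{\{|X|>R\}}}{\mascB}\lesssim\ep(R)\,\nm{G_{k,l}\,\omega_1}{\mascB}\lesssim\ep(R)$, uniformly in $k,l$. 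On the near part $\omega_2$ is bounded and $\chi_{\{|X|\le R\}}\in\mascB$ by Lemma~\ref{Lemma:GSembBF}, so \eqref{multprop} together with the local uniform convergence $G_{k,l}\to0$ on $\{|X|\le R\}$ (which follows since $U_{\mathfrak V}^{-1}$ acts by a locally bounded factor and a linear change of variables) gives $\nm{G_{k,l}\,\omega_2\,\chi_{\{|X|\le R\}}}{\mascB}\to0$ as $k,l\to\infty$ for each fixed $R$. Letting first $R$ and then $k,l$ grow shows that $\{F_k\}$ is Cauchy in $A(\omega_2,\mascB)$; completeness of $A(\omega_2,\mascB)$ (Proposition~\ref{Prop:MadAnalIdent}) then furnishes a convergent subsequence, so $i$ is compact.

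For the necessity in (2) I argue by contradiction, following the scheme in the proof of Theorem~\ref{continuous}. If $\omega_2/\omega_1\notin L^\infty_0(\rr{2d})$, there are $X_k=(x_k,\xi_k)$ with $|X_k|\to\infty$ and $\omega_2(X_k)/\omega_1(X_k)\ge\delta>0$. Put $f_k=\omega_1(X_k)^{-1}e^{\im\scal\cdo{\xi_k}}\phi(\cdo-x_k)$; exactly as in Theorem~\ref{continuous}, the assumptions $\omega_1\in\mascP_E(\rr{2d})$ and $v$ bounded make $\{f_k\}$ bounded in $M(\omega_1,\mascB)$. By compactness some subsequence converges in $M(\omega_2,\mascB)$ to a limit $g$. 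On one hand, for each fixed $X$ the covariance formula gives $V_\phi f_k(X)=\omega_1(X_k)^{-1}e^{\im(\cdots)}(V_\phi\phi)(X-X_k)$, and the Gaussian decay of $V_\phi\phi$ beats the bound $\omega_1(X_k)^{-1}\lesssim e^{r|X_k|}$ coming from \eqref{Eq:ModWeightPropCons}, so $V_\phi f_k(X)\to0$; since $M(\omega_2,\mascB)\hookrightarrow M^\infty_{(\omega_2)}$ by Lemma~\ref{Lemma:MBInfty} (or Remark~\ref{Rem:LargestModSpace} in the mixed Lebesgue case), this forces $g=0$. On the other hand the same embedding and the computation $\omega_2(X_k)|V_\phi f_k(X_k)|=(2\pi)^{-d/2}\,\omega_2(X_k)/\omega_1(X_k)\ge(2\pi)^{-d/2}\delta$ keep $\nm{f_k}{M(\omega_2,\mascB)}$ bounded below by a fixed positive constant, contradicting $f_k\to g=0$.

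The main obstacle is the pointwise step in (1): extracting uniform control on compacta from the abstract norm $\nm\cdot{A(\omega_1,\mascB)}$ when $\mascB$ is a general (quasi-)Banach function space and $\omega_1$ is only a $\mascP_Q$-weight, so that the comparison results requiring $\mascP_E$ — in particular Proposition~\ref{Prop:LargestModSpace} and Lemma~\ref{Lemma:MBInfty} — are no longer available. This is exactly where the analytic structure is indispensable: the subharmonicity of $|F|^r$ together with the local embedding of $\mascB$ into $L^r_{loc}$ replaces the Gabor-frame argument, which breaks down for non-moderate $\omega$ or for general $\mascB$. A secondary, purely bookkeeping difficulty is to run the near/far splitting with the $r$-triangle inequality so that it survives in the quasi-Banach regime.
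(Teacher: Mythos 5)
Your argument is correct, and its overall architecture coincides with the paper's: for sufficiency, extract a subsequence whose Bargmann transforms (equivalently, Gaussian-windowed short-time Fourier transforms) converge locally uniformly, then split $\rr {2d}$ into a ball and its complement, controlling the far part by $\omega _2/\omega _1\in L^\infty _0(\rr {2d})$ and the near part by local uniform convergence together with $\chi _{B_R}\in \mascB$; for necessity, your modulated-Gaussian contradiction via Lemma \ref{Lemma:MBInfty} is the paper's argument essentially verbatim. Where you genuinely diverge is the key compactness lemma. The paper (Lemma \ref{Lemma:UniformSeq}) obtains local uniform boundedness of the $F_j=\mathfrak V_df_j$ by citing \cite[Theorem 3.2]{To18} (boundedness in some $M^\infty _{(\omega _0)}$ with $\omega _0\in \mascP _Q$) and then extracts the convergent subsequence by hand, through Cauchy estimates on Taylor coefficients and a Cantor diagonalization over polydiscs; you instead derive the local bound intrinsically from the sub-mean-value property of $|F|^r$ combined with the continuous embedding $\mascB \hookrightarrow L^r_{\loc}$ and the lower Gaussian bound \eqref{Gaussest} on $\omega _1$, and then invoke Montel. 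Your route is more self-contained and makes transparent exactly where analyticity enters, at the price of the closed-graph step: to know that $\mascB \hookrightarrow L^r_{\loc}$ is continuous you must verify the graph is closed, which requires the (standard, but worth stating) observation that norm convergence in a solid complete quasi-normed function space forces almost everywhere convergence along a subsequence. Two minor points: the uniform-in-$z$ form of your subharmonicity inequality needs the usual Fock-space device of applying the sub-mean-value property to $w\mapsto F(w)e^{\scal w{\bar z}}$ rather than to $F$ itself, though on compact sets --- which is all you use --- the naive version suffices; and your explicit appeal to the $r$-triangle inequality \eqref{Eq:WeakTriangle2} in the near/far splitting in fact tidies up the paper's estimate \eqref{estimate}, which is written with the ordinary triangle inequality even though the theorem also admits mixed quasi-normed $\mascB$.
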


\par

We need the following lemma for the proof.

\par
\begin{lemma}\label{Lemma:UniformSeq}
Let $\mascB$ be an invariant BF space on $\rr {2d}$,
$\phi (x)=\pi ^{-\frac d4}e^{-\frac 12\cdot |x|^2}$,
$x\in \rr d$, $\omega \in \mascP _Q(\rr {2d})$ and
let $\{ f_j\} _{j=1}^\infty \subseteq \Sigma _1'(\rr d)$
be a bounded set in $M(\omega ,\mascB )$. Then there is a
subsequence $\{ f_{j_k}\} _{k=1}^\infty$ of $\{ f_j\}
_{j=1}^\infty$ such that $\{ V_\phi f_{j_k}\} _{k=1}^\infty$
is locally uniformly convergent.
\end{lemma}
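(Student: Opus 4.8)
The plan is to exploit that the window $\phi(x)=\pi^{-d/4}e^{-|x|^2/2}$ is Gaussian, so that the short-time Fourier transforms $V_\phi f_j$ are, up to the elementary factor $U_{\mathfrak V}$ in \eqref{EqUBDef}, the Bargmann transforms $F_j:=\mathfrak V_d f_j$, which are entire functions on $\cc d$ by the discussion preceding Definition \ref{Lemma:TheSpaces} and by Proposition \ref{Prop:MadAnalIdent}. Local uniform convergence of a subsequence of $\{V_\phi f_j\}$ is therefore equivalent to local uniform convergence of a subsequence of the entire functions $\{F_j\}$, and for the latter it suffices, by Montel's theorem on normal families in several complex variables, to prove that $\{F_j\}$ is uniformly bounded on every compact subset of $\cc d$. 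Thus the whole matter reduces to one quantitative estimate: a bound on $\sup_{z\in K}|F_j(z)|$ that is uniform in $j$, for each compact $K\subseteq \cc d$.

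To obtain this bound I would use subharmonicity. Since each $F_j$ is entire, $|F_j|$ is plurisubharmonic, hence subharmonic on $\rr{2d}\simeq\cc d$, so the sub-mean value inequality gives, for a fixed small radius $\rho>0$ and any $z_0$,
\[
|F_j(z_0)| \le \frac{1}{|B_\rho|}\int_{B_\rho(z_0)}|F_j(z)|\,dm(z).
\]
Writing $G_j=U_{\mathfrak V}^{-1}F_j=V_\phi f_j$ and invoking \eqref{EqUBDef}, on balls $B_\rho(z_0)$ with $z_0$ ranging over a fixed compact set the conversion factor $(2\pi)^{d/2}e^{(|x|^2+|\xi|^2)/2}$ and the weight $\omega$ are each bounded above and below by positive constants (recall $\omega,1/\omega\in L^\infty_{loc}$). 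Hence the right-hand side is dominated by a constant times $\int_{B}|G_j\,\omega|\,dm$ over a slightly larger fixed ball $B$. It then remains to control this local $L^1$-integral by $\nm{f_j}{M(\omega,\mascB)}=\nm{G_j\,\omega}{\mascB}$, which is uniformly bounded in $j$ by hypothesis. This is exactly the continuity of the inclusion $\mascB\hookrightarrow L^1_{loc}$: choosing $\psi\in\Sigma_1(\rr{2d})$ with $\psi\ge\chi_B$ and using Hölder's inequality in the BF-space together with $\Sigma_1\hookrightarrow\mascB'$ for the associate space $\mascB'$ (as in the proof of Lemma \ref{Lemma:MBInfty}), one gets $\int_B|G_j\,\omega|\le\nm{\psi}{\mascB'}\nm{G_j\,\omega}{\mascB}\le C_B$. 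Combining the two displays yields $\sup_{z\in K}|F_j(z)|\le C_K$ uniformly in $j$.

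With the uniform local bound in hand, Montel's theorem furnishes a subsequence $\{F_{j_k}\}$ converging locally uniformly on $\cc d$ to an entire function $F$. Finally I would transfer this back through $U_{\mathfrak V}^{-1}$: by \eqref{Eq:BargmannSTFTlink} and \eqref{EqUBDef} we have $V_\phi f_{j_k}=U_{\mathfrak V}^{-1}F_{j_k}$, and $U_{\mathfrak V}^{-1}$ acts by multiplication with a continuous, non-vanishing factor together with the linear substitution $(x,\xi)\mapsto(2^{1/2}x,-2^{1/2}\xi)$; both operations preserve local uniform convergence, so $\{V_\phi f_{j_k}\}$ converges locally uniformly (to $U_{\mathfrak V}^{-1}F$), which is the assertion.

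The step I expect to be the main obstacle is the uniform local bound, precisely because $\omega\in\mascP_Q$ need not be moderate: one cannot appeal to the embedding $M(\omega,\mascB)\hookrightarrow M^\infty_{(\omega)}$ of Lemma \ref{Lemma:MBInfty}, and a direct pointwise estimate through the reproducing formula $V_\phi f_j=P_\phi(V_\phi f_j)$ associated with \eqref{Eq:ProjOp} runs into trouble, since the Gaussian decay $|V_\phi\phi(Z)|\asymp e^{-|Z|^2/4}$ need not dominate the admissible growth $1/\omega(Z)\lesssim e^{r|Z|^2}$ permitted by \eqref{Gaussest}. Routing the argument through subharmonicity avoids this difficulty entirely, since the sub-mean value inequality only ever compares $F_j$ and $\omega$ over a fixed compact ball, where the bad exponential factors are harmless. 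If $\mascB$ is instead a mixed quasi-normed Lebesgue space of order $r\in(0,1]$, one replaces $|F_j|$ by the subharmonic function $|F_j|^r$ and the local $L^1$-estimate by a local $L^r$-estimate, the rest of the argument being unchanged.
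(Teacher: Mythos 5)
Your proof is correct, but it takes a genuinely different route from the paper at the two key steps. Both proofs begin with the same reduction, via \eqref{Eq:BargmannSTFTlink} and \eqref{EqUBDef}, to the entire functions $F_j=\mathfrak V_df_j$. For the crucial uniform local bound $\sup_{z\in K}|F_j(z)|\le C_K$, the paper simply cites \cite[Theorem 3.2]{To18} to get boundedness of $\{f_j\}$ in $M^{\infty}_{(\omega _0)}(\rr d)$ for some $\omega _0\in \mascP _Q(\rr {2d})$, whereas you derive it internally from the sub-mean value inequality for the plurisubharmonic function $|F_j|$ together with a local $L^1$-estimate against $\nm {V_\phi f_j\, \omega}{\mascB}$; and for the extraction of the subsequence, the paper runs an explicit Taylor-coefficient diagonalization over polydiscs $D_R$ (essentially reproving Montel), whereas you invoke Montel's theorem directly. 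Your route buys self-containedness (no appeal to \cite{To18}) and, as you correctly observe, it sidesteps the genuine obstruction that $\omega \in \mascP _Q$ need not be moderate, since everything is compared only over fixed compact sets where $\omega ,1/\omega \in L^\infty _{loc}$ suffices; the paper's Taylor argument buys explicitness and avoids quoting several-variable normal-family theory. The only point you should flesh out is the continuity of the embedding $\mascB \hookrightarrow L^1_{loc}$: Definition \ref{bfspaces1} only asserts the set inclusion $\mascB \subseteq L^1_{loc}$, so the bound $\int _B|h|\, dm\lesssim _B\nm h{\mascB}$ needs a short justification. This is available, e.g., by taking $0\le \fy \in \Sigma _1$ with $\fy \ge 1$ on $B-B$, noting that $(|h|*\fy )(x)\ge \int _B|h|\, dm$ for $x\in B$, and combining solidity with Minkowski's inequality \eqref{Eq:MinkIneq} and $\nm {\chi _B}{\mascB}>0$; your alternative via the associate space $\mascB '$ also works for the Banach case treated in the lemma, though it would not extend to genuinely quasi-Banach $\mascB$, where your remark about replacing $|F_j|$ by $|F_j|^r$ is the right fix.
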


\par

\begin{proof}
By the link between the Bargmann transform and Gaussian windowed
short-time Fourier transforms, the result follows if we prove the
assertion with $F_j=\mathfrak V_df_j$ in place of $V_\phi f_j$.
For any $R>0$, let $D_R$ be the poly-disc
\begin{align*}
D_R \equiv &\sets{(x,\xi )\in \rr {2d}}{x_j^2+\xi _j^2<R^2,\ j=1,\dots ,d}
\intertext{in $\rr {2d}$ which we identify with}
&\sets{x+i\xi \in \cc {d}}{x_j^2+\xi _j^2<R^2,\ j=1,\dots ,d}
\end{align*}
in $\cc d$. By 
\eqref{Eq:BargmannSTFTlink}
and an application of Cantor's diagonalization
principle the result follows if we prove that for each $R>0$, there
is a
subsequence $\{ f_{j_k}\} _{k=1}^\infty$ of $\{ f_j\}
_{j=1}^\infty$ such that $\{ F_{j_k}\} _{j=1}^\infty$
is uniformly convergent on $D_R$.

\par

By \cite[Theorem 3.2]{To18}, it follows that $\{ f_j\} _{j=1}^\infty$
is a bounded set in $M^{\infty}_{(\omega _0)}(\rr d)$ for some
choice of $\omega _0\in \mascP _Q(\rr {2d})$. Hence, 
$\{ V_\phi f_{j_k}\} _{j=1}^\infty$ and thereby
$\{ F_{j}\} _{j=1}^\infty$ are locally uniformly
bounded on $\rr {2d}$. In particular,
\begin{equation}\label{Eq:CR}
C_R \equiv \sup _{j\ge 1}\nm {F_j}{L^\infty (D_{2R})}
\quad \text{and}\quad
C_{R,\omega _0} \equiv \sup _{j\ge 1}\nm {F_j\omega _0}
{L^\infty (D_{2R})}
\end{equation}
are finite for every weight $\omega _0$ on $\cc d\simeq \rr {2d}$.

\par

By Cauchy's and Taylor's formulae we have
\begin{align}
F_j(z) &= \sum _{\alpha \in \nn d} a_j(\alpha )z^\alpha ,\qquad
z\in D_R,
\label{Eq:TaylorSerieFj}
\intertext{where}
|a_j(\alpha )| &\le C_R(2R)^{-|\alpha |}.
\label{Eq:TaylorCoeffEst}
\end{align}
In particular, if $\{ \beta _l\} _{l=1}^\infty$ be an enumeration
of $\nn d$, then for each $l\ge 1$, $\{ a_j(\beta _l )\}
_{j=1}^\infty$ is a bounded set in $\mathbf C$. Hence,
for a subsequence $I_1=\{ k_{1,1},k_{1,2},\dots \}$
of $\mathbf Z_+=\{ 1,2,\dots \}$, the limit
$$
\lim _{m\to \infty} a_{k_{1,m}}(\beta _1) 
$$
exists. By induction it follows that for some family of
subsequences
$$
I_N=\{ k_{N,1},k_{N,2},\dots \} \subseteq \mathbf Z _+,
$$
which decreases with $N$, the limit
$$
\lim _{m\to \infty} a_{k_{N,m}}(\beta _n) 
$$
exists for every $n\le N$.

\par

By Cantor's diagonal principle, there is a subsequence 
$\{ {j_k}\} _{k=1}^\infty$ of $\mathbf Z_+$ and sequence
$\{ b(\alpha )\} _{\alpha \in \nn d}$ such that
$$
\lim _{k\to \infty} a_{j_k}(\alpha ) = b(\alpha ).
$$
By \eqref{Eq:TaylorCoeffEst} we get
$$
|b(\alpha )| \le C_R(2R)^{-|\alpha |}.
$$
This in turn gives
\begin{equation}\label{Eq:TaylorTermEst}
\sup _{j\ge 1}\nm {a_j(\alpha )z^\alpha}{L^\infty (D_R)}
\le
C_R2^{-|\alpha |}
\quad \text{and}\quad
\nm {b(\alpha )z^\alpha}{L^\infty (D_R)}
\le
C_R2^{-|\alpha |}
\end{equation}
Hence, \eqref{Eq:TaylorSerieFj} and the Taylor series
$$
F(z) \equiv \sum _{\alpha \in \nn d}b(\alpha )z^\alpha ,
$$
are uniformly convergent on $D_R$, and by using
\eqref{Eq:TaylorTermEst}, it follows by
straight-forward computations that $F_{j_k}$ tends to $F$ uniformly
on $D_R$ when $k$ tends to infinity.
\end{proof}

\par

\begin{proof}[Proof of Theorem \ref{compact}]
In order to verify (1) we need to show, that a bounded sequence $\{f_j\}$ in
$M(\omega _1,\mascB )$ has a convergent subsequence in
$M(\omega _2,\mascB )$.
By means of the assumptions there is a sequence of increasing balls $B_k$,
$k\in \mathbf Z_+$,
centered at the origin with radius tending to $+\infty$ as $k\to
\infty$ such that
\begin{equation}\label{n.1}
\frac{\omega_2(x,\xi)}{\omega_1(x,\xi)}\le\frac{1}{k},\quad
\text{when}\quad (x,\xi )\in \rr {2d}\setminus B_k.
\end{equation}

\par

By Lemma \ref{Lemma:UniformSeq}
it follows that if $\phi (x)=\pi ^{-\frac d4}e^{-\frac 12\cdot |x|^2}$,
$x\in \rr d$, then there is a
subsequence $\{ h_j\}_{j=1}^\infty$ of $\{ f_{j}\}_{j=1}^\infty$
such that $\{ V_\phi h_j\}_{j=1}^\infty$ converges uniformly on any $B_k$,
and converges on the whole $\rr {2d}$.

\par

We have to prove that
$\nm {h_{m_1} - h_{m_2}}{M(\omega_2,\mascB )}\to 0$ as
$m_1,m_2\to \infty$. Let 
$\chi _k$ be the characteristic function of $B_k$, $k\ge 1$.
From the fact that $C_R$ in \eqref{Eq:CR} is bounded we have
\begin{multline}\label{estimate}
\begin{aligned}
\| &h_{m_1} - h_{m_2} \| _{M(\omega_2,\mascB )} = \nm
{V_\phi h_{m_1}-V_\phi h_{m_2}}{\mascB _{(\omega _2)}}
\\[1ex]
&\le
\|(V_\phi h_{m_1}-V_\phi h_{m_2})\chi _{k}\|_{\mascB _{(\omega_2)}}+
{\|V_\phi h_{m_1}-V_\phi h_{m_2}\|_{\mascB _{(\omega_1)}}}/{k}
\\[1ex]
&\le
\|(V_\phi h_{m_1}-V_\phi h_{m_2})\chi _{k}\|_{\mascB _{(\omega_2)}}+
{2C}/{k},
\end{aligned}
\end{multline}
where $C=C_R$ is the constant in \eqref{Eq:CR}.

\par

In order to make the right-hand side arbitrarily small, $k$ is first
chosen large enough. Then $V_\phi h_1, V_\phi h_2,\dots$ is a sequence
of bounded continuous functions converging uniformly on the compact set
$\overline B_k$. Since $\omega_2$ is a weight and $\mascB$ is an
invariant BF-space we obtain
\begin{align*}
    &\|\left( V_{\phi}h_{m_1}-V_{\phi}h_{m_2} \right) \chi_{k}
    \|_{\mascB_{(\omega_2)}}
    =\|\left( V_{\phi}h_{m_1}-V_{\phi}h_{m_2} \right)\omega_2 \chi_{k}
    \|_{\mascB}
    \\ 
    &\qquad\qquad \lesssim
    \left (
    \sup_{(x,\xi) \in B_k} |\left(
    V_{\phi}h_{m_1}(x,\xi)-V_{\phi}h_{m_2}(x,\xi) \right)
    \omega_2(x, \xi)|
    \right )
    \|\chi_{k}\|_{\mascB}
    \\
    &\qquad \qquad \lesssim \sup_{(x,\xi) \in B_k}
    |V_{\phi}h_{m_1}(x,\xi)-V_{\phi}h_{m_2}(x,\xi)|
\end{align*}
tends to zero as $m_1$ and $m_2$ tend to infinity. This proves (1).

\medspace

In order to verify (2) we suppose that the embedding $i$ in \eqref{embedding1} is
compact and all assumptions of the second claim hold. From the first part of the
proof, the result follows if we prove that
$\omega _2/\omega _1$ turns to zero at infinity. We prove this claim by 
contradiction.

\par

Suppose there is a sequence $(x_k,\xi _k)
\in \rr {2d}$ with $|(x_k,\xi _k)| \rightarrow \infty$ if
$k \rightarrow \infty$ and a $C>0$ fulfilling
\begin{equation}\label{Eq:ContradIneq}
    \frac{\omega_2(x_k,\xi _k)}{\omega _1(x_k,\xi _k)}
    \geq C \qquad \text{for all } k \in \mathbf {N}.
\end{equation}
Let $\phi$ be as in \eqref{Eq:BargmannSTFTlink} and set
$$
f_k=\frac{1}{\omega_1(X_k)}
e^{i\langle \cdo ,\xi _k\rangle}\phi (\cdo
-x_k),\qquad X_k= (x_k,\xi _k).
$$

\par

By the proof of Theorem \ref{continuous}, it follows that the sequence 
$\{ f_k \} _{k=1}^\infty$ is bounded in $M(\omega_1,\mascB)$,
and by the assumptions $\{ f_k \} _{k=1}^\infty$ is precompact in
${M(\omega_2, \mascB)}$.

\par

Let $\phi \in \Sigma_1 (\rr d)$. Then $V_{\phi}\phi
\in \Sigma_1 (\rr {2d})$ by Remark \ref{Rem:STFT}. From the fact
$\omega _1\gtrsim e^{-r_0|\cdo |}$ for some $r_0>0$
we get
$$
\int \phi(x)\overline{f_k(x)}\, dx =
\frac{1}{\omega_1(x_k,\xi _k)}(V_{f_0}\phi)(x_k,\xi _k)\to 0,
$$
as $k \to \infty$, which implies, that $f_k$ tends to zero in
$\Sigma_1 '(\rd)$. Hence the only possible limit point in
$M(\omega_2, \mascB)$ of $\{ f_k \} _{k=1}^\infty$ is zero.

\par

As $\{ f_k \} _{k=1}^\infty$ is precompact in $M(\omega_2, \mascB)$,
we can then extract a subsequence $\{ f_{k_j}\} _{j=1}^\infty$ which converges to zero
in $M(\omega_2, \mascB)$.

\par

Since $M(\omega_2, \mathscr{B}) \hookrightarrow M^\infty_{(\omega_2)}$
due to Lemma \ref{Lemma:MBInfty} we have
\begin{equation}
\sup_{X\in\rr {2d}}\omega_2(X)|(V_\phi (f_{k_j}))(X)|\le
C\|f_{k_j}\|_{M(\omega_2, \mascB)}\to 0
\end{equation}
as $j\to \infty$. Taking $X=X_{k_j}$ in the previous inequality provides
\begin{multline}
\omega_2(X_{k_j})|(V_\phi (f_k))(X_{k_j})|
=
\frac{\omega_2(X_{k_j})}{\omega_1(X_{k_j})}|(V_\phi (e^{i\langle \cdo
,\xi _{k_j}\rangle}\phi (\cdo -x_{k_j}))(X_{k_j})|
\\
=
\frac{\omega_2(X_{k_j})}{\omega_1(X_{k_j})}|(V_\phi \phi)(0)|
(2\pi )^{-\frac d2}\, \frac{\omega_2(X_{k_j})}{\omega_1(X_{k_j})}
\to 0.
\end{multline}
which contradicts \eqref{Eq:ContradIneq} and proves (2).
\end{proof}

\par

As an immediate consequence of Lemma
\ref{compares1} and Theorem \ref{compact} we get:

\par

\begin{cor} Assume that $\omega_1,\omega_2\in \mascP (\mathbf
R^{2n})$, and that $p,p_0,q,q_0\in[1,\infty]$ such that
$p_0,q_0<\infty$. Assume also that ${\omega_2}/{\omega_1}\in
L^{p_0,q_0} (\rr {2d})$. Then the embedding \eqref{embedding1}
is compact.
\end{cor}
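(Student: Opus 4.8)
The plan is to derive the corollary directly from Theorem \ref{compact}(1) via Lemma \ref{compares1}, so the entire task reduces to showing that the stated hypotheses force $\omega_2/\omega_1\in L^\infty_0(\rr{2d})$. Once that membership is established, Theorem \ref{compact}(1) applies verbatim and yields compactness of \eqref{embedding1}.

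First I would check that the framework of Theorem \ref{compact} is available. Here $\mascB=L^{p,q}$ with $p,q\in[1,\infty]$ is, by Example \ref{Lpqbfspaces}, an invariant BF-space with respect to $v=1$, so the BF-space (equivalently, mixed-norm Lebesgue) hypothesis on $\mascB$ holds. For the weights I would record the chain $\mascP(\rr{2d})\subseteq\mascP_E(\rr{2d})\subseteq\mascP_Q(\rr{2d})$: the first inclusion holds because a polynomial bound is in particular a Gaussian bound, giving \eqref{Gaussest}; the second follows by checking that a moderate weight satisfies the relaxed condition \eqref{Eq:ModRelax}, since in the range $Rc\le|x|\le c/|y|$ one has $|y|\le 1/2$, whence $v(\pm y)$ is bounded and $\omega(x\pm y)\asymp\omega(x)$, while \eqref{Gaussest} is immediate from the at most exponential growth recorded in \eqref{Eq:ModWeightPropCons}. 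Thus $\omega_1,\omega_2\in\mascP_Q(\rr{2d})$, as required.

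Next I would use Lemma \ref{compares1}(1): since $\mascP_E$ is a cone closed under division, $\omega_2/\omega_1\in\mascP_E(\rr{2d})$. The key remaining step is to identify the classical mixed Lebesgue space $L^{p_0,q_0}(\rr{2d})$ containing $\omega_2/\omega_1$ with an $E$-split Lebesgue space in the sense of Definition \ref{Def:MixedLebSpaces}: taking $E$ the standard basis of $\rr{2d}$ (with the first $d$ vectors spanning the $x$-space) and $\mabfp=(p_0,\dots,p_0,q_0,\dots,q_0)$ with $d$ copies of each, the iterated norm of Definition \ref{Def:MixedLebSpaces} reproduces exactly the norm of $L^{p_0,q_0}_1$. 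Because $p_0,q_0<\infty$, we have $\mabfp\in(0,\infty)^{2d}$, so the embedding \eqref{compares2} of Lemma \ref{compares1}(2), applied in dimension $2d$ with trivial weight, gives $\omega_2/\omega_1\in\mascP_E(\rr{2d})\cap L^\infty_0(\rr{2d})$; in particular $\omega_2/\omega_1\in L^\infty_0(\rr{2d})$. Invoking Theorem \ref{compact}(1) then finishes the proof. The only genuinely nontrivial point is this bookkeeping — matching $L^{p_0,q_0}$ to the correct $L^{\mabfp}_E$ and observing that finiteness of $p_0,q_0$ places $\mabfp$ in the open range required by \eqref{compares2}; everything else is a direct citation of the quoted results.
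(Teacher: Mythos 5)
Your proposal is correct and follows exactly the route the paper intends: the paper states the corollary as ``an immediate consequence of Lemma \ref{compares1} and Theorem \ref{compact}'', and your argument simply fills in the details of that deduction (closure of $\mascP _E$ under division, identification of $L^{p_0,q_0}$ with an $E$-split space with finite exponents so that \eqref{compares2} yields $\omega _2/\omega _1\in L^\infty _0$, then Theorem \ref{compact}{\,}(1)). The additional checks that $\mascP \subseteq \mascP _Q$ and that $L^{p,q}$ is an invariant BF-space are exactly the bookkeeping the paper leaves implicit.
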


\par

\medspace


\begin{thebibliography}{2000}

\bibitem{AbCoTo} A. Abdeljawad, S. Coriasco, J. Toft
\emph{Liftings for ultra-modulation spaces, and one-parameter
groups of Gevrey type pseudo-differential operators}, arXiv:1712.04338.


\bibitem{Aik} T. Aoki \emph{Locally bounded linear topological spaces},
Proc. Imp. Acad. Tokyo \textbf{18} (1942), 588--594.

\vspace{0.2cm}

\bibitem{Bog} P. Boggiatto, \emph{Localization operators with $L^p$
symbols on modulation spaces}, preprint Quaderni
Dip. Mat. Univ. Torino, n.7, 2003.

\vspace{0.2cm}

\bibitem{BogCorGro} P. Boggiatto, E. Cordero, K. Gr{\"o}chenig,
\emph{Generalized Anti-Wick Operators with Symbols in Distributional 
Sobolev spaces} Research Report, Quaderni Dip.  Mat. Univ. Torino,
n. 32, 2002.

\vspace{0.2cm}

\bibitem{BoTo}  P. Boggiatto, J. Toft \emph{Embeddings and compactness for
      generalized Sobolev-Shubin spaces and modulation spaces},
      Appl. Anal. \textbf{84} (2005), 269--282.

\vspace{0.2cm}

\bibitem{CaTo} M. Cappiello, J. Toft \emph{Pseudo-differential operators
in a Gelfand-Shilov setting}, Math. Nachr. \textbf{290} (2017), 738--755.

\vspace{0.2cm}

\bibitem{CoJoTo1} S. Coriasco, K. Johansson, J. Toft
\emph{Local wave-front sets of Banach and Fr{\'e}chet
types, and pseudo-differential operators}, Monatsh. Math.
\textbf{169} (2013), 285--316.

\vspace{0.2cm}

\bibitem{DorFeiGro} M. D{\"o}rfler, K. Gr{\"o}chenig,
H. Feichtinger, \emph{Compactness criteria in function spaces},
Coll. Math, Vol.94, N.1, 2002.


\vspace{0.2cm}

\bibitem{Fe2} H. G. Feichtinger \emph{Banach spaces of
distributions of Wiener's type and interpolation, \rm {in:
Ed. P. Butzer, B. Sz. Nagy and E. G{\"o}rlich (Eds), Proc.
Conf. Oberwolfach, Functional Analysis and Approximation,
August 1980}}, Int. Ser. Num. Math. \textbf{69} Birkh{\"a}user Verlag,
Basel, Boston, Stuttgart, 1981, pp. 153--165.

\vspace{0.2cm}

\bibitem{Fe3} H. G. Feichtinger \emph{Banach convolution algebras of
Wiener's type, {\rm in: Proc. Functions, Series, Operators in
Budapest}}, {Colloquia Math. Soc. J. Bolyai, North Holland
Publ. Co.}, Amsterdam Oxford NewYork, 1980.

\vspace{0.2cm}

\bibitem{Fe4} H. G. Feichtinger \emph{Modulation spaces on locally compact
abelian groups. Technical report}, {University of
Vienna}, Vienna, 1983; also in: M. Krishna, R. Radha,
S. Thangavelu (Eds) Wavelets and their applications, Allied
Publishers Private Limited, NewDehli Mumbai Kolkata Chennai Hagpur
Ahmedabad Bangalore Hyderbad Lucknow, 2003, pp.99--140.

\vspace{0.2cm}

\bibitem{Fe6} H. G. Feichtinger \emph{Wiener amalgams over
Euclidean spaces and some of their applications, {\rm{in:
Function spaces (Edwardsville, IL, 1990)}}}, Lect.
Notes in pure and appl. math., \textbf{136}, Marcel
Dekker, New York, 1992, pp. 123--137.

\vspace{0.2cm}

\bibitem{Fe8} H. G. Feichtinger \emph{Modulation spaces: Looking
back and ahead}, Sampl. Theory Signal Image Process. \textbf{5}
(2006), 109--140.

\vspace{0.2cm}

\bibitem{FG1}  {H. G. Feichtinger and K. H. Gr{\"o}chenig}
\emph{Banach spaces related to integrable group representations and
their atomic decompositions, I}, J. Funct. Anal. \textbf{86}
(1989), 307--340.

\vspace{0.2cm}

\bibitem{FG2} {H. G. Feichtinger and K. H. Gr{\"o}chenig} \emph{Banach spaces related to integrable
group representations and their atomic decompositions, II},
Monatsh. Math. \textbf{108} (1989), 129--148.

\vspace{0.2cm}

\bibitem{FG4} {H. G. Feichtinger and K. H. Gr{\"o}chenig}
\emph{Gabor frames and time-frequency analysis of
distributions}, {J. Functional Anal. (2)} \textbf{146}
(1997), 464--495.

\vspace{0.2cm}

\bibitem{FeGaTo1} C. Fernandez, A. Galbis-Verdu, J. Toft
\emph{The Bargmann transform and powers of harmonic
oscillator on Gelfand-Shilov subspaces}, RACSAM
\textbf{111} (2017), 1--13.

\vspace{0.2cm}

\bibitem{Fo1} G. B. Folland \emph{Harmonic analysis in phase space},
Princeton University Press, Princeton (1989).

\vspace{0.2cm}

\bibitem{GaSa} Y. V. Galperin, S. Samarah \emph{Time-frequency analysis
on modulation spaces $M^{p,q}_m$, $0<p,q\le \infty$}, Appl. Comput.
Harmon. Anal. \textbf{16} (2004), 1--18.

\vspace{0.2cm}

\bibitem{GS} {I. M. Gelfand, G. E. Shilov} \emph{Generalized functions, I--III},
Academic Press, NewYork London, 1968.

\vspace{0.2cm}

\bibitem{Gc1} {K. H. Gr{\"o}chenig} \emph {Describing functions:
atomic decompositions versus frames}, {Monatsh. Math.}\textbf{112}
(1991), 1--42.

\vspace{0.2cm}

\bibitem{Gc2} {K. H. Gr{\"o}chenig} \emph{Foundations of
Time-Frequency Analysis}, {Birkh{\"a}user}, Boston, 2001.

\vspace{0.2cm}

\bibitem{Gc2.5}
K. Gr\"ochenig \emph{Weight functions in time-frequency analysis
\rm {in: L. Rodino, M. W. Wong (Eds) Pseudodifferential
Operators: Partial Differential Equations and Time-Frequency Analysis}},
Fields Institute Comm., \textbf{52} 2007, pp. 343--366.

\vspace{0.2cm}

\bibitem{GL} K. Gr{\"o}chenig, M. Leinert \emph{Wiener's lemma
for twisted convolution and Gabor frames}, J. Amer. Math. Soc. (1)
\textbf{17} (2004), 1--18.

\vspace{0.2cm}

\bibitem{GZ} K. Gr{\"o}chenig,G. Zimmermann
\emph{Spaces of test functions via the STFT} J. Funct. Spaces Appl. \textbf{2}
(2004), 25--53.

\vspace{0.2cm}

\bibitem{Ho1}  L. H{\"o}rmander \emph{The Analysis of Linear
Partial Differential Operators}, vol {I, III}, 
{Springer-Verlag}, Berlin Heidelberg NewYork Tokyo, 1983, 1985.

\vspace{0.2cm}

\bibitem{PT2} {S. Pilipovi\'c, N. Teofanov} \emph{On a symbol class of Elliptic
Pseudodifferential Operators}, {Bull. Acad. Serbe Sci. Arts} \textbf
{27} (2002), 57--68.

\vspace{0.2cm}

\bibitem{PT3} {S. Pilipovi\'c, N. Teofanov} \emph{Pseudodifferential operators
on ultra-modulation  spaces}, {J. Funct. Anal.}\textbf{208} (2004), 194--228.

\vspace{0.2cm}

\bibitem{Rol} S. Rolewicz \emph{On a certain class of linear metric spaces},
Bull. Acad. Polon. Sci. S{\'e}r. Sci. Math. Astrono. Phys., \textbf{5} (1957),
471--473.

\vspace{0.2cm}

\bibitem{RSTT} M. Ruzhansky, m. Sugimoto, N. Tomita, J. Toft \emph{Changes
of variables in modulation and Wiener amalgam spaces}, Preprint,
2008, Available at arXiv:0803.3485v1.

\vspace{0.2cm}

\bibitem{RuTo} M. Ruzhansky, N. Tokmagambetov
\emph{Nonharmonic analysis of boundary value problems},
Int. Math. Res. Notices \textbf{12} (2016), 3548--3615.

\vspace{0.2cm}

\bibitem{Shubin} M A. Shubin, \emph{Pseudodifferential operators
and spectral theory}, Nauka, Moscow 1978 (Russian),
Springer-Verlag, 1987, 2001 (English).

\vspace{0.2cm}

\bibitem{Ta}  {K. Tachizawa} \emph{The boundedness of
pseudo-differential operators on modulation spaces},
Math. Nachr. \textbf{168} (1994), 263--277.

\vspace{0.2cm}

\bibitem{Te1}  {N. Teofanov} \emph{Ultramodulation spaces and
pseudodifferential operators}, {Endowment Andrejevi\'c}, Beograd,
2003.

\vspace{0.2cm}

\bibitem{Te2} {N. Teofanov} \emph{Modulation spaces, Gelfand-Shilov
spaces and pseudodifferential operators}, Sampl. Theory Signal
Image Process, \textbf{5} (2006), 225--242.

\vspace{0.2cm}

\bibitem{To1}  {J. Toft} \emph{Continuity and Positivity
Problems in Pseudo-Differential Calculus, Thesis}, {Department of
Mathematics, University of Lund}, Lund, 1996.

\vspace{0.2cm}

\bibitem{To2} {J. Toft} \emph{Subalgebras to a Wiener type Algebra of
Pseudo-Differential operators}, {Ann. Inst. Fourier (5)} \textbf {51}
(2001), 1347--1383.

\vspace{0.2cm}

\bibitem{To7} {J. Toft} \emph{Continuity properties for
modulation spaces with applications to pseudo-differential calculus,
I}, {J. Funct. Anal. (2)}, \textbf{207} (2004),
399--429.

\vspace{0.2cm}

\bibitem{To8} {J. Toft} \emph{Continuity
properties for modulation spaces with applications to
pseudo-differential calculus, II}, {Ann. Global Anal. Geom.},
\textbf{26} (2004), 73--106.

\vspace{0.2cm}

\bibitem{To14} {J. Toft} \emph{Pseudo-differential operators with smooth symbols on
modulation spaces}, Cubo, \textbf{11} (2009), 87--107.

\vspace{0.2cm}

\bibitem{To18} J. Toft \emph{The Bargmann transform on modulation and
Gelfand-Shilov spaces, with applications to Toeplitz and
pseudo-differential operators}, J. Pseudo-Differ. Oper. Appl. 
\textbf{3} (2012), 145--227.

\vspace{0.2cm}

\bibitem{To20} J. Toft
\emph{Gabor analysis for a broad class of quasi-Banach modulation
spaces {\rm {in: S. Pilipovi{\'c}, J. Toft (eds)}},
Pseudo-differential operators, generalized functions},
Operator Theory: Advances and Applications \textbf{245},
Birkh{\"a}user, 2015, 249--278.


\bibitem{To22} J. Toft \emph{Images of function and distribution spaces under the
Bargmann transform}, J. Pseudo-Differ. Oper. Appl. \textbf{8} (2017), 83--139.

\vspace{0.2cm}

\bibitem{To24} J. Toft \emph{Continuity and compactness for
pseudo-differential operators
with symbols in quasi-Banach spaces or H{\"o}rmander classes}, Anal. Appl. 
\textbf{15} (2017), 353--389.

\vspace{0.2cm}

\bibitem{To26} J. Toft \emph{Schatten properties, nuclearity and minimality of
phase shift invariant spaces}, Appl. Comput. Harmon. Anal. (online 2017).
\end{thebibliography}
\end{document}